\newtheorem{lemma}{Lemma}
\newtheorem{theorem}{Theorem}
\newcolumntype{L}{>{\raggedright\arraybackslash}X} 
\newcolumntype{R}{>{\raggedleft\arraybackslash}X}  
\DeclareMathOperator*{\argmin}{arg\,\min}
\newcommand{\prox}{\mathbf{prox}}
\begin{document}

\begin{frontmatter}

\title{Stochastic DCA for minimizing a large sum of DC functions with application to Multi-class Logistic Regression}

\author[address]{Hoai An Le Thi\corref{mycorrespondingauthor}}
\cortext[mycorrespondingauthor]{Corresponding author}
\ead{hoai-an.le-thi@univ-lorraine.fr}
\address[address]{Department of Computer Science and Applications, LGIPM, \\ University of Lorraine, France}

\author[address]{Hoai Minh Le}
\ead{minh.le@univ-lorraine.fr}

\author[address]{Duy Nhat Phan}
\ead{nhatsp@gmail.com}

\author[address]{Bach Tran}
\ead{bach.tran@univ-lorraine.fr}



%
%

\begin{abstract}
We consider the large sum of DC (Difference of Convex) functions minimization problem which appear in several different areas, especially in stochastic optimization and machine learning. 
Two DCA (DC Algorithm) based algorithms are proposed: stochastic DCA and inexact stochastic DCA. 
We prove that the convergence of both algorithms to a critical point is guaranteed with probability one. 
Furthermore, we develop our stochastic DCA for solving an important problem in multi-task learning, namely group variables selection in multi class logistic regression. 
The corresponding stochastic DCA is very inexpensive, all computations are explicit. 
Numerical experiments on several benchmark datasets and synthetic datasets illustrate the efficiency of our algorithms and their superiority over existing methods, with respect to classification accuracy, sparsity of solution as well as running time.
\end{abstract}

\begin{keyword}
Large sum of DC functions \sep DC Programming \sep DCA \sep Stochastic DCA \sep Inexact Stochastic DCA \sep Multi-class Logistic Regression 
\end{keyword}

\end{frontmatter}


\section{Introduction}

We address the so called \textit{large sum of DC 
functions minimization} problem which takes the form 
\begin{equation}
\min\limits_{x\in \mathbb{R}^{d}}\left\{ F(x):=\frac{1}{n}%
\sum_{i=1}^{n}F_{i}(x)\right\} ,  \label{sum-model}
\end{equation}%
where $F_{i}$ are DC functions, i.e., $F_{i}(x)=g_{i}(x)-h_{i}(x)$ with $%
g_{i}$ being lower semi-continuous proper convex and $h_{i}$ being convex, and $%
n$ is a very large integer number. The problem of minimizing $F$ under a
convex set $\Omega$ is also of the type (\ref{sum-model}), as the convex
constraint $x\in \Omega$ can be incorporated into the objective function $F$
via the indicator function $\chi _{\Omega}$ on $\Omega$ defined by $\chi
_{\Omega}(x)=0$ if $x\in \Omega$, $+\infty $ otherwise.  Our study is motivated
by the fact that the problem (\ref{sum-model}) appears in several different
contexts, especially in stochastic optimization and machine learning. For
instance, let us consider the minimization of expected loss in stochastic
programming 
\begin{equation}
\min_{x\in \Omega }\mathbb{E}[f(x,\xi )],  \label{min-expected-loss}
\end{equation}%
where $f$ is a loss function of variables $x$ and $\xi $, and $\xi $ is a
random variable. A standard approach for solving (\ref%
{min-expected-loss-approx}) is the sample average method \citep{Healy1991} which
approximates the problem \eqref{min-expected-loss} by 
\begin{equation}
\min_{x\in \Omega }\frac{1}{n}\sum_{i=1}^{n}f(x,\xi _{i}),
\label{min-expected-loss-approx}
\end{equation}
where $\xi _{1},...,\xi _{n}$ are independent variables, identically
distributed realizations of $\xi $. When the loss function $f$ is DC, the
problem \eqref{min-expected-loss-approx} takes the form of \eqref{sum-model}
with $F_{i}(x)=f(x,\xi _{i})+\chi _{\Omega }(x)$. Obviously, the larger $n$
is, the better approximation will be. Hence, a good approximate model of the form
(\ref{min-expected-loss-approx})
  in average sample
methods 
requires an extremely
large number $n$.


Furthermore, let us consider an important problem in machine learning, the
multi-task learning. Let $T$ be the number of tasks. For the $j$-th task,
the training set $\mathcal{D}_{j}$ consists of $n_{j}$ labeled data points
in the form of ordered pairs $(x_{i}^{j},y_{i}^{j}),i=1,...,n_{j}$, with $%
x_{i}^{j}\in \mathbb{R}^{d}$ and its corresponding output $y_{i}^{j}\in 
\mathbb{R}$. Multi-task learning aims to estimate $T$ predictive functions $%
f_{\theta }^{j}(x):\mathbb{R}^{d}\rightarrow \mathbb{R}^{m},j=1,...,T$,
which fit well the data. The multi-task learning can be formulated as 
\begin{equation}
\min_{\theta }\left\{ \sum_{j=1}^{T}\sum_{i=1}^{n_{j}}\mathcal{L}%
(y_{i}^{j},f_{\theta }^{j}(x_{i}^{j}))+\lambda p(\theta )\right\} ,
\label{mul:task}
\end{equation}%
where $\mathcal{L}$ denotes the loss function, $p$ is a regularization term
and $\lambda >0$ is a trade-off parameter. For a good learning process, $\sum_{j=1}^Tn_{j}$ is, in general, a very large number. Clearly, this problem takes
the form of \eqref{sum-model} when $\mathcal{L}$ and $p$ are DC functions.
We observe that numerous loss functions in machine learning (e.g. least
square loss, squared hing loss, ramp loss, logistic loss, sigmoidal loss, etc) are DC. On
another hand, most of existing regularizations can be expressed as DC
functions. For instance, in learning with sparsity problems involving the
zero norm (which include, among of others, variable~/~group variable
selection in classification, sparse regression, compressed sensing) all
standard nonconvex regularizations studied in the literature are DC
functions \citep{LT15}. Moreover, in many applications dealing with big
data, the number of both variables and samples are very large.

The problem (\ref{sum-model}) has a double difficulties due to the
nonconvexity of $F_{i}$ and the large value of $n$. Meanwhile, the sum
structure of $F$ enjoys an advantage: one can work on $F_{i}$ instead of the
whole function $F$. Since all $F_{i}$ are DC functions, $F$ is DC too, 
and therefore (\ref{sum-model}) is a standard DC program, i.e., minimizing a
DC function under a convex set and/or the whole space.

To the best of our knowledge, although several methods have been developed
for solving different special cases of \eqref{sum-model}, there is no
existing work that considers the general problem (\ref{sum-model}) as well.
The stochastic gradient (SG) method was first introduced in \cite%
{Robbins51} and then developed in \cite{Bottou98,Lecun98}
for solving  \eqref{min-expected-loss-approx} in the unconstrained case ($%
\Omega = R^d$) with $f(\cdot,\xi_i)$ being smooth functions. The SG method
chooses $i_l\in\{1,...,n\}$ randomly and takes the update 
\begin{equation}
x^{l+1} = x^l - \alpha_l \nabla f(x^l,\xi_{i_l}),
\end{equation}
where $\alpha_l$ is the step size and $\nabla f(x^l,\xi_{i_l})$ is a
stochastic gradient. Later, \cite{Berinc2011,Bertsekas2010} proposed the proximal
stochastic subgradient methods (also referred as incremental proximal
methods) for solving \eqref{min-expected-loss-approx} in convex case, i.e., $%
\Omega$ is a closed convex set and $f(\cdot,\xi_i)$ are convex functions.
The computational cost per iteration of these basic SG methods is very
cheap, however, due to the variance introduced by random sampling, their
convergence rate are slower than the ''full'' gradient methods. Hence, some
SG methods for solving \eqref{min-expected-loss-approx} in unconstrained
differentiable convex case use either the average of the stored past
gradients or a multi-stage scheme to progressively reduce the variance of
the stochastic gradient (see e.g \cite%
{Schmidt2017,Shalev2013,Defazio2014a,Defazio2014b,Johnacc13}). With the variance reduction
techniques, other variants of the SG method have been proposed for nonconvex
problem \eqref{min-expected-loss-approx} where the $L$-smooth property is
required (see e.g. \cite{Maiinc15,Reddi2016,Allen-Zhu16}).

As (\ref{sum-model}) is a DC program, a natural way to tackle it is using
DCA (DC Algorithm) (see \citep{LeThi05,LeThi18,PLT98,PLT97,PLT14} and
references therein), an efficient approach in nonconvex programming
framework. DCA addresses the problem of minimizing a DC function on the
whole space $\mathbb{R}^{d}$ or on a closed convex set $\Omega \subset 
\mathbb{R}^{d}$. Generally speaking, a standard DC program takes the form: 
\begin{equation*}
\alpha =\inf \{F(x):=G(x)-H(x)\,|\,x\in \mathbb{R}^{d}\}\quad (P_{dc}),
\end{equation*}%
where $G,H$ are lower semi-continuous proper convex functions on $\mathbb{R}%
^{d}$. Such a function $F$ is called a DC function, and $G-H$ is a DC
decomposition of $F$ while $G$ and $H$ are the DC components of $F$. DCA has
been introduced in 1985~\cite{PhamDinh1986} and extensively developed since 1993 (%
\citep{LeThi05,LeThi18,PLT98,PLT97,PLT14} and references therein) to
become now classic and increasingly popular. Most of existing methods in
convex/nonconvex programming are special versions of DCA via appropriate DC
decompositions (see \citep{LeThi18}). In recent years, numerous DCA based
algorithms have been developed for successfully solving large-scale
nonsmooth/nonconvex programs appearing in several application areas,
especially in machine learning, communication system, biology, finance, etc. (see e.g. the list of references in \cite{LeThi2005,LeThi18}).
DCA has been proved to be a fast and scalable approach which is, thanks to
the effect of DC decompositions, more efficient than related methods. For a
comprehensible survey on thirty years of development of DCA, the reader is
referred to the recent paper \citep{LeThi18}. New trends in the
development of DCA concern novel versions of DCA  based algorithms (e.g.
online/stochastic/approximate/like DCA) to accelerate the convergence and to
deal with large-scale setting and big data. Our present work follows this
direction. 

The original key idea of DCA relies on the DC structure of the objective
function $F$. DCA consists in iteratively approximating the considered DC
program by a sequence of convex ones. More precisely, at each iteration $l$,
DCA approximates the second DC component $H(x)$ by its affine minorization $%
H_l(x):=H(x^l)+\langle x-x^l,y^l\rangle$, with $y^{l}\in \partial H(x^{l})$,
and minimizes the resulting convex function.

\noindent \textbf{Basic DCA scheme}

\noindent \textbf{Initialization:} Let $x^0 \in \text {dom }\partial H$, $l=0
$.

\noindent \textbf{For $l = 0, 1, \ldots$ until convergence of $\{x^l\}$:}

k1: Calculate $y^l \in \partial H (x^l)$;

k2: Calculate $x^{l+1} \in \text{argmin} \{G(x)-H_l(x):x \in \mathbb{R}^d\}
~ (P_l)$.

\medskip

To tackle the difficulty due to the large value of $n,$ we first propose the
so called \textit{stochastic DCA} by exploiting the sum structure of $F$.
The basic idea of stochastic DCA is to update, at each iteration, the
minorant of only some randomly chosen $h_{i}$ while keeping the minorant of
the other $h_{i}$. Hence the main advantage of the stochastic DCA versus
standard DCA is the computational reduction in the step of computing a
subgradient of $H$. Meanwhile, the convex subproblem is the same in both
standard DCA and stochastic DCA. The first work in this direction was
published in the conference paper \cite{LeThi17-ICML} where we only considered
a machine learning problem which is a special case of \eqref{sum-model},
namely 
\begin{equation*}
\min_x{\frac{1}{n} \sum_{i=1}^nf_i(x) + \lambda \|x\|_{2,0}},
\end{equation*}
where $f_i$ are L-Lipschitz functions. We rigorously studied the convergence
properties of this stochastic DCA and proved that its convergence is
guaranteed with probability one. In the present work, the same convergence
properties of stochastic DCA for the general model (\ref{sum-model}) is
proved. Furthermore, to deal with the large-scale setting, we propose an 
\textit{inexact stochastic DCA} version in which both subgradient of $H$ and
optimal solution of the resulting convex program are approximately computed.
We show that the convergence properties of stochastic DCA are still valid
for the inexact stochastic DCA.

Finally, we show how to develop the proposed stochastic DCA for the group
variables selection in multi-class logistic regression, a very important
problem in machine learning which takes the form \eqref{sum-model}. 
Numerical experiments on very large synthetic and real-world datasets show
that our approach is more efficient, in both quality and rapidity, than
related methods.

The remainder of the paper is organized as follows. Solution methods based
on stochastic DCA for solving \eqref{sum-model} is developed in Section \ref{sec:SDCA} while the stochastic DCA for the group variables selection in
multi-class logistic regression is presented in Section \ref{sec:application}. Numerical experiments are reported in Section \ref{sec:experiment}.
Finally, Section \ref{Conclusion} concludes the paper.

\section{Stochastic DCA for minimizing a large sum of DC functions}\label{sec:SDCA}

Before presenting the stochastic DCA, let us recall some basic notations
that will be used in the sequel.


The modulus of a convex function
$\theta:\mathbb{R}^d\to\mathbb{R}\cup\{+\infty\}$ on $\Omega$, denoted by $\rho(\theta, \Omega)$ or $\rho(\theta)$ if $\Omega = \mathbb{R}^{n}$, is given by
$$
\rho(\theta, \Omega) = \text{sup} \{ \rho \geq 0 : \theta - (\rho/2) \|. \|^2 \text{ is convex on } \Omega \}.
$$

One says that $\theta$ is $\rho$-convex (resp. \textit{strongly convex}) on $\Omega$ if $\rho(\theta, \Omega) \geq 0$ (resp. $\rho(\theta, \Omega) > 0$).


For $\varepsilon >0$ and $x^0\in \mbox{ dom }\theta ,$ the $\varepsilon $%
-subdifferential of $\theta $ at $x^0$, denoted $\partial \theta _{\varepsilon
}(x^0)$, is defined by 
\begin{equation}
\partial \theta _{\varepsilon }(x^{0}):=\{y\in \mathbb{R}^{d}:\theta (x)\geq
\theta (x^{0})+\langle x-x^{0},y\rangle -\varepsilon :\forall x\in \mathbb{R}^d\},
\label{partial}
\end{equation}%
while $\partial \theta(x^0)$ stands for the usual (or exact) subdifferential of $%
\theta $ at $x^0$ (i.e. $\varepsilon =0$ in (\ref{partial})).

For $\epsilon \geq 0$, a point $x_\epsilon$ is called an $\epsilon$-solution
of the problem $\inf\{f(x): x\in\mathbb{R}^d\}$ if 
\begin{equation*}
f(x_\epsilon) \leq f(x) +\epsilon\ \forall x\in\mathbb{R}^d.
\end{equation*}

\subsection{Stochastic DCA}
Now, let us introduce a stochastic version of DCA, named SDCA, for solving %
\eqref{sum-model}. A natural DC formulation of the problem \eqref{sum-model}
is 
\begin{equation}
\min \left\{ F(x)=G(x)-H(x):x\in \mathbb{R}^{d}\right\} ,  \label{DC_problem}
\end{equation}%
where 
\begin{equation*}
G(x)=\frac{1}{n}\sum_{i=1}^{n}g_{i}(x)\ \text{and}\ H(x)=\frac{1}{n}%
\sum_{i=1}^{n}h_{i}(x).
\end{equation*}%
According to the generic DCA scheme, DCA for solving the problem %
\eqref{DC_problem} consists of computing, at each iteration $l$, a
subgradient $v^{l}\in \partial H(x^{l})$ and solving the convex subproblem
of the form 
\begin{equation}
\min_{\ }\left\{ G(x)-\langle v^{l},x\rangle :x\in \mathbb{R}^{d}\right\} .
\label{convex-dca}
\end{equation}%
As $H=\sum_{i=1}^{n}h_{i}$, the computation of subgradients of $H$ requires
the one of all functions $h_{i}$. This may be expensive when $n$ is very
large. The main idea of SDCA is to update, at each iteration, the minorant
of only some randomly chosen $h_{i}$ while keeping the minorant of the other 
$h_{i}$. Hence, only the computation of such randomly chosen $h_{i}$ is
required.

SDCA for solving the problem \eqref{DC_problem} is described in Algorithm %
\ref{alg:SDCA-DC-sum} below.

\begin{algorithm}[H]
   \caption{SDCA for solving the problem \eqref{sum-model}}
   \label{alg:SDCA-DC-sum}
\begin{algorithmic}
   \State {\bfseries Initialization:} Choose $x^0\in\mathbb{R}^d$, $s_0 = \{1,...,n\}$, and $l\leftarrow 0$.
   \State {\bfseries Repeat}
   \State\quad\quad 1. Compute $v^l_i \in\partial h_i(x^l)$ if $i\in s_l$ and keep $v^l_i=v^{l-1}_i$ if $i\notin s_l$, $l>0$. Set $v^l = \frac{1}{n}\sum_{i=1}^nv^l_i$.
   \State\quad\quad 2. Compute $x^{l+1}$ by solving the convex problem \eqref{convex-dca}.
     
   \State\quad\quad 3. Set $l\leftarrow l+1$ and randomly choose a small subset $s_l\subset\{1,...,n\}$.   
    \State {\bfseries Until} Stopping criterion.
\end{algorithmic}
\end{algorithm}

The following theorem shows that the convergence properties of SDCA are guaranteed with probability one.  
\begin{theorem}\label{theorem1}
Assume that $\alpha^* = \inf F(x) > -\infty$, and $|s_l| = b$ for all $l>0$. Let $\{x^l\}$  be a sequence generated by SDCA , the following statements are hold. 
\begin{itemize}
\item[a)] $\{F(x^l)\}$ is the almost sure convergent sequence.
\item[b)] If $\min_i\rho(h_i)>0$, then $\sum_{l=1}^{\infty}\|x^l-x^{l-1}\|^2 < +\infty$ and $\lim_{l\rightarrow \infty}\|x^l-x^{l-1}\| =0$, almost surely.
\item[c)] If $\min_i\rho(h_i)>0$, then every limit point of $\{x^l\}$ is a critical point of $F$ with probability one.
\end{itemize}
\end{theorem}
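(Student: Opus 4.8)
My plan is to drive all three statements from a single Lyapunov potential that absorbs the staleness of the stored subgradients, and then invoke a monotone/supermartingale convergence argument. For each index $i$ and iteration $l$ write $z_i^l$ for the point at which the stored $v_i^l$ was last computed, so that $z_i^l=x^l$ when $i\in s_l$ and $z_i^l=z_i^{l-1}$ otherwise, and set $d_i^l:=h_i(x^l)-h_i(z_i^l)-\langle v_i^l,x^l-z_i^l\rangle\ge 0$ (nonnegativity is exactly the subgradient inequality for the convex $h_i$). The affine map $H_l(x):=\frac1n\sum_{i=1}^n\big[h_i(z_i^l)+\langle v_i^l,x-z_i^l\rangle\big]$ minorizes $H$, its gradient is the aggregate $v^l$, and $x^{l+1}$ is the exact minimizer of the $\rho(G)$-convex surrogate $G-H_l$. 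First I would introduce the potential $\Phi_l:=F(x^l)+\frac1n\sum_{i=1}^n d_i^l$ and establish the pathwise estimate
\begin{equation*}
\Phi_{l+1}\le \Phi_l-\tfrac{\rho(G)}{2}\,\|x^{l+1}-x^l\|^2-\tfrac1n\sum_{i\in s_{l+1}}\big[h_i(x^{l+1})-h_i(z_i^l)-\langle v_i^l,x^{l+1}-z_i^l\rangle\big],
\end{equation*}
by combining the strong-convexity inequality for $G-H_l$ at its minimizer $x^{l+1}$ with the bookkeeping that refreshing an index $i\in s_{l+1}$ resets its Bregman term to zero. Since every subtracted quantity is nonnegative, $\{\Phi_l\}$ is nonincreasing along every sample path and bounded below by $\alpha^\ast$, hence converges; telescoping then gives, almost surely, $\sum_l\rho(G)\|x^{l+1}-x^l\|^2<\infty$ together with $\sum_l\frac1n\sum_{i\in s_{l+1}}d_i(x^{l+1};z_i^l)<\infty$, where $d_i(u;w)$ denotes the stored Bregman term evaluated at the two points.

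For part (a), since $F(x^l)=\Phi_l-\frac1n\sum_i d_i^l$ and $\Phi_l$ converges, it suffices to show the staleness term $\frac1n\sum_i d_i^l$ vanishes almost surely. I would argue that, because $|s_l|=b$ and the subsets are drawn at random, each index is selected infinitely often with probability one (Borel--Cantelli), so the summable ``flush'' masses above force the per-index Bregman terms to zero along the refresh times, and this is then propagated to all iterations to yield convergence of $\{F(x^l)\}$. For part (b), under $\min_i\rho(h_i)=:\rho_h>0$ each stored term dominates a squared distance, $d_i(x^{l+1};z_i^l)\ge\frac{\rho_h}{2}\|x^{l+1}-z_i^l\|^2$, so the second summable series upgrades to $\sum_l\sum_{i\in s_{l+1}}\|x^{l+1}-z_i^l\|^2<\infty$; combining the distances of a freshly selected iterate to its stale reference with the random refreshing then produces $\sum_l\|x^l-x^{l-1}\|^2<\infty$ and $\|x^l-x^{l-1}\|\to0$ almost surely. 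I emphasize that this route cannot rely on $\rho(G)$ being positive: the contraction must be extracted from the strong convexity of the $h_i$ through the staleness terms, since the $\rho(G)$-term may be zero.

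For part (c), let $\bar x$ be a limit point with $x^{l_k}\to\bar x$. The optimality condition of the subproblem gives $v^l\in\partial G(x^{l+1})$, and part (b) gives $x^{l_k+1}\to\bar x$; since each $h_i$ is finite-valued convex, its subgradients are locally bounded, so the stored $v_i^{l_k}$ are bounded and, along a further subsequence, converge. The decisive point is that staleness vanishes, $\|x^{l}-z_i^{l}\|\to0$ for every $i$ almost surely, so each limiting stored subgradient lies in $\partial h_i(\bar x)$ and the average $\bar v$ lies in $\partial H(\bar x)=\frac1n\sum_i\partial h_i(\bar x)$; closedness of the graph of $\partial G$ together with $x^{l_k+1}\to\bar x$ and $v^{l_k}\to\bar v$ then gives $\bar v\in\partial G(\bar x)$. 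Hence $\bar v\in\partial G(\bar x)\cap\partial H(\bar x)$, i.e. $\bar x$ is a critical point of $F$, with probability one.

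The hard part, common to all three items, is turning the purely summable refresh masses $\sum_l\sum_{i\in s_{l+1}}d_i(x^{l+1};z_i^l)<\infty$ into genuine control of every stored reference, namely $\|x^l-z_i^l\|\to0$ almost surely. The difficulty is that the number of steps between two consecutive selections of a fixed index is random and, though almost surely finite, is not uniformly bounded, so one cannot naively bound the drift of $z_i^l$ by a fixed number of small steps. For parts (b) and (c) my plan is to control this by pairing Borel--Cantelli (each index refreshed infinitely often) with the $\ell^2$-summability $\sum_l\|x^l-x^{l-1}\|^2<\infty$, which bounds the accumulated movement over each inter-selection block and forces the references to track the iterates. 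For part (a), where $\|x^l-x^{l-1}\|\to0$ is not yet available, the vanishing of the staleness term must be wrung from the summable flushes and infinitely-often refreshing alone, and I expect this to be the most delicate step of the whole argument.
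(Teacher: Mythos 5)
Your bookkeeping and your pathwise Lyapunov inequality are correct, and they in fact coincide with the paper's starting point: writing $T^l(x)=\frac1n\sum_{i=1}^n\bigl[g_i(x)-h_i(z_i^l)-\langle x-z_i^l,v_i^l\rangle\bigr]$, your potential is $\Phi_l=T^l(x^l)$ and your descent estimate is exactly the paper's inequality \eqref{eq4}, $T^l(x^{l+1})\le T^{l-1}(x^l)-\frac1n\sum_{i\in s_l}d_i(x^l;z_i^{l-1})$. The genuine gap is precisely the step you flag as delicate: upgrading summability of the \emph{flushed} masses $\sum_l\frac1n\sum_{i\in s_{l+1}}d_i(x^{l+1};z_i^l)<\infty$ to control of the \emph{full} staleness $\frac1n\sum_{i=1}^n d_i^l$ (needed for (a)) and of $\|x^l-z_i^l\|$ for every $i$ (needed for (b) and (c)). The pathwise tools you propose cannot close it. For (a): between two refresh times of a fixed index $i$ the pair $(z_i,v_i)$ is frozen, so $d_i^l$ is driven solely by the motion of $x^l$, on which you have no bound at this stage (no strong convexity is assumed in (a), and $\rho(G)$ may vanish, as you yourself note); hence ``refreshed infinitely often'' plus vanishing flush masses along refresh times says nothing about $d_i^l$ at intermediate iterations. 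For (b): the claimed consequence $\sum_l\|x^l-x^{l-1}\|^2<\infty$ does \emph{not} follow pathwise from $\sum_l\sum_{i\in s_{l+1}}\|x^{l+1}-z_i^l\|^2<\infty$, because a flushed term comparable to $\|x^{l+1}-x^l\|^2$ appears only when $s_{l+1}\cap s_l\neq\emptyset$ (one must refresh an index whose reference is $x^l$), and this event can fail at every iteration with positive probability (for $b=1$ it occurs with probability $1/n$ per step). Your plan is thus circular: you need $\|x^l-x^{l-1}\|\to0$ to propagate refresh-time information, but that is exactly what you are trying to prove.

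The missing ingredient is the one probabilistic step the paper makes and your outline omits: condition on the filtration $\mathcal{F}_l=\sigma(x^0,\ldots,x^l,s_0,\ldots,s_{l-1})$ and use that each index belongs to $s_l$ with probability $b/n$, so that the conditional expectation of the flushed mass equals $\frac{b}{n}$ times the full staleness,
\begin{equation*}
\mathbb{E}\Bigl[\tfrac1n\textstyle\sum_{i\in s_l}d_i(x^l;z_i^{l-1})\,\Big|\,\mathcal{F}_l\Bigr]=\tfrac{b}{n}\bigl[T^{l-1}(x^l)-F(x^l)\bigr],
\end{equation*}
and similarly with the weights $\rho(h_i)\|x^l-z_i^{l-1}\|^2$ when $\min_i\rho(h_i)>0$. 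Feeding your pathwise inequality through this conditional expectation and applying the supermartingale (Robbins--Siegmund) convergence theorem --- or, equivalently, L\'{e}vy's extension of the Borel--Cantelli lemma for nonnegative adapted series, which is the rigorous form of the patch you were reaching for --- yields at once, almost surely: convergence of $T^{l-1}(x^l)$, $\sum_l[T^{l-1}(x^l)-F(x^l)]<\infty$, and $\sum_l\sum_{i=1}^n\|x^l-z_i^{l-1}\|^2<\infty$, i.e.\ the full sums over all $i$, not merely the refreshed ones. From these, (a) is immediate; (b) follows because some $i\in s_{l-1}$ satisfies $z_i^{l-1}=x^{l-1}$; and your argument for (c) (local boundedness of subgradients of the finite convex $h_i$, closedness of $\partial h_i$ and $\partial G$, optimality of the subproblem) then goes through verbatim and coincides with the paper's. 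With that step inserted your proof is complete, but it then is, in substance, the paper's proof.
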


\begin{proof}
a) Let $x^0_i$ be the copies of $x^0$. We set $x^{l+1}_i = x^{l+1}$ for all $i\in s_{l+1}$ and $x^{l+1}_j = x^l_j$ for $j\not\in s_{l+1}$. We then have $v^l_i\in\partial h_i(x_i^l)$ for $i=1,...,n$. Let $T_i^l$ be the function given by
\begin{align*}
T_i^l(x) = g_i(x)  - h_i(x_i^l) - \left\langle x - x_i^l, v^l_i \right\rangle.
\end{align*}
It follows from $v^l_i\in\partial h_i(x_i^l)$ that
\begin{equation*}
h_i(x) \geq h_i(x_i^l) + \left\langle x - x_i^l, v^l_i \right\rangle.
\end{equation*}
That implies $T_i^l(x) \geq F_i(x) \geq F_i(x)$ for all $l \geq 0$, $i=1,...,n$. We also observe that $x^{l+1}$ is a solution to the following convex problem
\begin{equation}\label{sub-problem2}
\min_{x} T^l(x):=\frac{1}{n}\sum_{i=1}^nT_i^l(x),
\end{equation}
Therefore
\begin{equation}\label{eq4}
\begin{aligned}
T^l(x^{l+1}) \leq T^l(x^l)& = T^{l-1}(x^l)+ \frac{1}{n}\sum_{i\in s_l}[T_i^l(x^l) - T_i^{l-1}(x^l)] \\
& = T^{l-1}(x^l)+ \frac{1}{n}\sum_{i\in s_l}[F_i(x^l) + 2\epsilon^l - T_i^{l-1}(x^l)],
\end{aligned}
\end{equation}
where the second equality follows from $T_i^l(x^l) = F_i(x^l)$ for all $i\in s_l$.
Let $\mathcal{F}_l$ denote the $\sigma$-algebra generated by the entire history of SDCA up to the iteration $l$, i.e., $\mathcal{F}_0 = \sigma(x^0)$ and $\mathcal{F}_l = \sigma(x^0,...,x^l, s_0,...,s_{l-1})$ for all $l \geq 1$. By taking the expectation of the inequality \eqref{eq4} conditioned on $\mathcal{F}_l$, we have 
\begin{equation*}
\mathbb{E}\left[T^l(x^{l+1})|\mathcal{F}_l\right] \leq T^{l-1}(x^l) - \frac{b}{n}\left[T^{l-1}(x^l) - F(x^l)\right] .
\end{equation*}
By applying the supermartingale convergence theorem \citep{neveu75,Bertsekas03} to the nonnegative sequences $\{T^{l-1}(x^l) - \alpha^*\}, \{\frac{b}{n}[T^{l-1}(x^l) - F(x^l)]\}$ and $\{0\}$, we   conclude that the sequence $\{T^{l-1}(x^l,y^l) - \alpha^*\}$ converges to $T^* -\alpha^*$ and 
\begin{equation}\label{eq728}
\sum_{l=1}^\infty\left[ T^{l-1}(x^l) - F(x^l)\right] < \infty,
\end{equation}
with probability $1$. Therefore $\{F(x^l)\}$ converges almost surely to $T^*$.

b) By $v_i^{l-1}\in\partial h_i(x_i^{l-1})$, we have
\begin{equation*}\label{eq1}
h_i(x)  \geq  h_i(x_i^{l-1}) + \langle x-x_i^{l-1}, v_i^{l-1} \rangle + \frac{\rho(h_i)}{2}\|x - x_i^{l-1}\|^2,\ \forall x\in\mathbb{R}^d.
\end{equation*}
This implies
\begin{equation}\label{eq3}
F_i(x)  \leq T_i^{l-1}(x) - \frac{\rho(h_i)}{2}\|x - x_i^{l-1}\|^2.
\end{equation}
From \eqref{eq4} and \eqref{eq3} with $x=x^l$, we have 
\begin{equation}\label{eq5}
T^l(x^{l+1}) \leq T^{l-1}(x^l) - \frac{1}{n}\sum_{i\in s_l}\frac{\rho(h_i)}{2}\|x - x_i^{l-1}\|^2 .
\end{equation} 
Taking the expectation of the inequality \eqref{eq5} conditioned on $\mathcal{F}_l$, we obtain 
\begin{equation*}
\mathbb{E}\left[T^l(x^{l+1})|\mathcal{F}_l\right] \leq T^{l-1}(x^l) - \frac{b}{4n^2}\sum_{i=1}^n\rho(h_i)\|x^l-x_i^{l-1}\|^2 + \left(\frac{2b}{n}+1
\right)\epsilon^l.
\end{equation*}
Combining this and $\rho = \min_{i=1,...,n}\rho(h_i)  >0$ gives us
\begin{equation*}
\mathbb{E}\left[T^l(x^{l+1})|\mathcal{F}_l\right] \leq T^{l-1}(x^l) - \frac{b\rho}{2n^2}\sum_{i=1}^n\|x^l-x_i^{l-1}\|^2 .
\end{equation*}
Applying the supermartingale convergence theorem to the nonnegative sequences $\{T^{l-1}(x^l) - \alpha^*\}, \{\frac{b\rho}{2 n^2}\sum_{i=1}^n\|x^l-x_i^{l-1}\|^2\}$ and $\{0\}$, we get 
\begin{equation*}\label{eq7}
\sum_{l=1}^\infty\sum_{i=1}^n\|x^l-x_i^{l-1}\|^2 < \infty,
\end{equation*}
with probability $1$. In particular, for $i=1,...,n$, we have
\begin{equation}\label{eq8}
\sum_{l=1}^\infty\|x^l-x_i^{l-1}\|^2 < \infty,
\end{equation}
and hence $\lim_{l \rightarrow \infty}\|x^l-x_i^{l-1}\| =0$ almost surely. 

c) Assume that there exists a sub-sequence $\{x^{l_k}\}$ of $\{x^l\}$ such that $x^{l_k} \rightarrow x^*$ almost surely. From \eqref{eq8}, we have $\|x^{l_k+1}-x_i^{l_k}\| \rightarrow 0$ almost surely. Therefore, by the finite convexity of $h_i$, without loss of generality, we can suppose that the sub-sequence $v_i^{l_k}$ tends to $v^*_i$ almost surely. Since $v_i^{l_k}\in\partial h_i(x_i^{l_k})$ and by the closed property of the subdifferential mapping $\partial h_i$, we have $v^*_i\in\partial h_i(x^*)$. As $x^{l_k+1}$  is a solution of the problem $\min_x T^{l_k}(x)$, we obtain
\begin{equation}\label{eq9}
0 \in \partial T^{l_k}(x^{l_k+1}).
\end{equation}
This is equivalent to
\begin{equation}
0 \in \partial \frac{1}{n}\sum_{i=1}^ng_i(x^{l_k+1}) - \frac{1}{n}\sum_{i=1}^nv_i^{l_k} = \partial G(x^{l_k+1}) -  \frac{1}{n}\sum_{i=1}^nv_i^{l_k}.
\end{equation}
Hence, $\frac{1}{n}\sum_{i=1}^nv_i^{l_k}\in \partial G(x^{l_k+1})$. By the closed property of the subdifferential mapping $\partial G$, we obtain $v^* = \frac{1}{n}\sum_{i=1}^nv^*_i\in \partial G(x^*)$ with probability one. Therefore,
\begin{equation}
v^*\in \partial G(x^*)\cap\partial H(x^*),
\end{equation}
with probability $1$. This implies that $x^*$ is a critical point of $F$ with probability $1$ and the proof is then complete. 
\end{proof}

\subsection{Inexact stochastic DCA}

The SDCA scheme requires the exact computations of $v_i^l$ and $x^{l+1}$.
Observing that, for standard DCA these computations are not necessarily exact~\cite{LeThi18}, we are
suggested to introduce an inexact version of SDCA. This could be useful when the
exact computations of $v_i^l$ and $x^{l+1}$
are expensive. The inexact version of SDCA computes $\epsilon$-subgradients  $v^l_i \in\partial_{\epsilon^l} h_i(x^l)$ and an $\epsilon^l$-solution $x^{l+1}$ of the convex problem \eqref{convex-dca} instead of the exactly computing. The inexact version of SDCA,
named ISDCA, is described as follows.

\begin{algorithm}[H]
   \caption{Inexact SDCA for solving the problem \eqref{sum-model}}
   \label{alg:ISDCA-DC-sum}
\begin{algorithmic}
   \State {\bfseries Initialization:} Choose $x^0\in\mathbb{R}^d$, $s_0 = \{1,...,n\}$, $\epsilon^0\geq 0$ and $l\leftarrow 0$.
   \State {\bfseries Repeat}
   
   \State\quad\quad 1. Compute $v^l_i \in\partial_{\epsilon^l} h_i(x^l)$ if $i\in s_l$ and keep $v^l_i=v^{l-1}_i$ if $i\notin s_l$, $l>0$. Set $v^l = \frac{1}{n}\sum_{i=1}^nv^l_i$.
   \State\quad\quad 2. Compute an $\epsilon^l$-solution $x^{l+1}$ of the convex problem \eqref{convex-dca}.  
  
   \State\quad\quad 3. Set $l\leftarrow l+1$, randomly choose a small subset $s_l\subset\{1,...,n\}$, and update $\epsilon^l\geq 0$.   
    \State {\bfseries Until} Stopping criterion.
\end{algorithmic}
\end{algorithm}

Under an assumption that $\sum_{l=0}^\infty\epsilon^l < +\infty$, the ISDCA has the same convergence properties as SDCA, which are stated in the following theorem. 

\begin{theorem}\label{theorem2}
Assume that $\alpha^* = \inf F(x) > -\infty$, and $|s_l| = b$ for all $l>0$. Let $\{x^l\}$  be a sequence generated by ISDCA with respect to a nonnegative sequence $\{\epsilon^l\}$ such that $\sum_{l=0}^\infty\epsilon^l < +\infty$ almost surely. The following statements are hold. 
\begin{itemize}
\item[a)] $\{F(x^l)\}$ is the almost sure convergent sequence.
\item[b)] If $\min_i\rho(h_i)>0$, then $\sum_{l=1}^{\infty}\|x^l-x^{l-1}\|^2 < +\infty$ and $\lim_{l\rightarrow \infty}\|x^l-x^{l-1}\| =0$, almost surely.
\item[c)] If $\min_i\rho(h_i)>0$, then every limit point of $\{x^l\}$ is a critical point of $F$ with probability one.
\end{itemize}
\end{theorem}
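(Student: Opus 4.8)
The plan is to mirror the three-part argument used for Theorem~\ref{theorem1}, carrying along the two new sources of error introduced by ISDCA and controlling them through the summability hypothesis $\sum_{l=0}^\infty \epsilon^l < +\infty$. I keep the same bookkeeping device: the copies $x_i^l$ with $x_i^l = x^l$ for $i \in s_l$, the surrogate $T_i^l(x) = g_i(x) - h_i(x_i^l) - \langle x - x_i^l, v_i^l\rangle$, and its average $T^l = \frac1n \sum_i T_i^l$. The difference is that $v_i^l \in \partial_{\epsilon^l} h_i(x_i^l)$ now yields only the relaxed minorization $T_i^l(x) \geq F_i(x) - \epsilon^l$ instead of $T_i^l \geq F_i$, and that $x^{l+1}$ is merely an $\epsilon^l$-solution of \eqref{sub-problem2}, so $T^l(x^{l+1}) \leq T^l(x^l) + \epsilon^l$ rather than an exact minimizer. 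Both deviations are $O(\epsilon^l)$ and hence summable; notably, the chain \eqref{eq4} and the estimate in part~b) of the proof of Theorem~\ref{theorem1} already display the $\epsilon^l$ terms, so the algebra is in place.

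For part~a) I would reproduce \eqref{eq4} keeping the explicit $\epsilon^l$ terms and, after conditioning on $\mathcal{F}_l$, obtain a recursion of the form
\begin{equation*}
\mathbb{E}\!\left[T^l(x^{l+1}) \mid \mathcal{F}_l\right] \leq T^{l-1}(x^l) - \tfrac{b}{n}\!\left[T^{l-1}(x^l) - F(x^l)\right] + c\,\epsilon^l
\end{equation*}
for a fixed constant $c$ (the coefficient $\tfrac{2b}{n}+1$ read off from part~b) of the proof of Theorem~\ref{theorem1}). Since now $T^{l-1}(x^l) \geq F(x^l) - \epsilon^{l-1} \geq \alpha^* - \epsilon^{l-1}$, the quantity $T^{l-1}(x^l) - \alpha^*$ is no longer automatically nonnegative, so before invoking the supermartingale convergence theorem I would shift it by the tail $\sum_{k \geq l-1} c\,\epsilon^k$, which is finite and vanishes, to restore nonnegativity; the additive perturbation $c\,\epsilon^l$ is then admissible precisely because $\sum_l \epsilon^l < \infty$ almost surely (this is the Robbins--Siegmund form with summable noise term). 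The conclusion is unchanged: $\{T^{l-1}(x^l)\}$ converges a.s. and $\sum_l [T^{l-1}(x^l) - F(x^l)] < \infty$ a.s., whence $\{F(x^l)\}$ converges almost surely.

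Part~b) then runs as before: $\rho$-convexity of each $h_i$ sharpens \eqref{eq3} by a quadratic term, and the same conditioning, with the summable $\epsilon^l$ correction absorbed exactly as in part~a), gives $\sum_l \sum_i \|x^l - x_i^{l-1}\|^2 < \infty$ a.s. For part~c), the one genuinely new point is the passage to the limit: along a convergent subsequence $x^{l_k} \to x^*$ I would use that an $\epsilon^{l_k}$-solution satisfies the approximate optimality condition $0 \in \partial_{\epsilon^{l_k}} T^{l_k}(x^{l_k+1})$ and that $v_i^{l_k} \in \partial_{\epsilon^{l_k}} h_i(x_i^{l_k})$. Since $\sum_l \epsilon^l < \infty$ forces $\epsilon^{l_k} \to 0$, and since $x_i^{l_k} \to x^*$ and (up to a further subsequence) $v_i^{l_k} \to v_i^*$, the closed-graph property of the $\epsilon$-subdifferential \emph{jointly in $(\epsilon,x)$} yields $v_i^* \in \partial h_i(x^*)$ and the exact inclusion $v^* = \frac1n\sum_i v_i^* \in \partial G(x^*)$ in the limit, so $v^* \in \partial G(x^*) \cap \partial H(x^*)$ and $x^*$ is critical with probability one.

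The main obstacle I anticipate is this last limit passage: I must ensure that shrinking-$\epsilon$ subgradients and $\epsilon$-optimality conditions degenerate to \emph{exact} subdifferential inclusions, which is exactly the closedness of the graph of $(\epsilon,x)\mapsto \partial_\epsilon \theta(x)$ as $\epsilon \downarrow 0$; this is what makes the summability of $\{\epsilon^l\}$ indispensable rather than cosmetic. A secondary, purely technical nuisance is the nonnegativity bookkeeping in part~a): because the surrogate now under-estimates $F$ by $O(\epsilon^l)$, the sequences fed to the supermartingale theorem are nonnegative only after absorbing the finite tails of $\sum_l \epsilon^l$, and one must check that these shifts leave the conclusions intact.
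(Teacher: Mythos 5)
Your overall architecture is sound, and parts a) and c) of your plan are essentially correct and close to what the paper does. In a), the paper avoids your tail-shifting device by building the tolerance directly into the surrogate: it defines $T_i^l(x) = g_i(x) - h_i(x_i^l) - \langle x - x_i^l, v_i^l\rangle + 2\epsilon_i^l$, with per-index stale tolerances $\epsilon_i^l$ tracked exactly like the copies $x_i^l$ (a small bookkeeping slip in your sketch: for $i \notin s_l$ the relevant tolerance is the stale $\epsilon_i^l$, not the current $\epsilon^l$). This yields $T_i^l \geq F_i + \epsilon_i^l \geq F_i$, so the supermartingale convergence theorem applies to genuinely nonnegative sequences with the summable perturbation $\left(\frac{2b}{n}+1\right)\epsilon^l$; your Robbins--Siegmund shift buys the same conclusion at the cost of extra accounting. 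In c), your closed-graph formulation of $(\epsilon,x)\mapsto\partial_\epsilon G(x)$, via the observation that an $\epsilon$-solution satisfies $0\in\partial_\epsilon T^l(x^{l+1})$, is the same computation the paper performs longhand by passing to the limit in $T^{l_k}(x^{l_k+1}) \leq T^{l_k}(x) + \epsilon^{l_k}$ and using lower semicontinuity of $G$; the paper additionally derives $\epsilon_i^l \to 0$ from $\frac{1}{n}\sum_{i=1}^n \epsilon_i^l \leq T^l(x^{l+1}) - F(x^{l+1})$ together with the summability established in a), which also disposes of the staleness of the tolerances without invoking that every index is resampled infinitely often.

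The genuine gap is in part b). You assert that $\rho$-convexity sharpens the minorization by a quadratic term ``as before,'' with the inexactness absorbed as a summable $O(\epsilon^l)$ correction; i.e., that $v\in\partial_\epsilon h_i(x)$ with $h_i$ $\rho$-convex gives $h_i(y) \geq h_i(x) + \langle v, y-x\rangle + \frac{\rho}{2}\|y-x\|^2 - O(\epsilon)$. That inequality is false. Take $h(x)=\frac{\rho}{2}x^2$ on $\mathbb{R}$; then $\partial_\epsilon h(x) = \left[\rho x - \sqrt{2\rho\epsilon},\, \rho x + \sqrt{2\rho\epsilon}\right]$, and for $v = \rho x + \sqrt{2\rho\epsilon}$ the residual is $h(y)-h(x)-v(y-x)-\frac{\rho}{2}(y-x)^2 = -\sqrt{2\rho\epsilon}\,(y-x)$: unbounded below globally, and of size $O(\sqrt{\epsilon})$ already at $|y-x|=1$ --- and $\sum_l \sqrt{\epsilon^l}$ can diverge while $\sum_l \epsilon^l < \infty$ (e.g.\ $\epsilon^l = l^{-2}$), so the correction cannot be ``absorbed exactly as in part a).'' The missing ingredient is precisely the paper's Lemma \ref{lemma1}: trade half the modulus for a doubled tolerance, $2\epsilon + f(y) \geq f(x) + \langle v, y-x\rangle + \frac{\rho}{4}\|y-x\|^2$, proved by inserting $z = x + t(y-x)$ into the $\epsilon$-subgradient inequality, adding the $\rho$-convexity inequality, and setting $t=1/2$; the example above shows this form is sharp. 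With that lemma substituted, your part b) goes through verbatim, the descent estimate becoming $\mathbb{E}\left[T^l(x^{l+1})\,|\,\mathcal{F}_l\right] \leq T^{l-1}(x^l) - \frac{b\rho}{4n^2}\sum_{i=1}^n\|x^l-x_i^{l-1}\|^2 + \left(\frac{2b}{n}+1\right)\epsilon^l$ in place of the exact-case one.
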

This theorem is analogously proved as Theorem \ref{theorem1}  and its proof is provided in Appendix \ref{app}.

\section{Application to Group Variables Selection in multi-class Logistic Regression}\label{sec:application}

Logistic regression, introduced by D. Cox in 1958 \cite{Cox1958}, is undoubtedly one of the most popular supervised learning methods. Logistic regression has been successfully applied in various real-life problems such as cancer detection \cite{Kim2008}, medical \cite{Boyd1987,Bagley2001,Subasi2005}, social science \cite{King2001}, etc. Especially, logistic regression combined with feature selection has been proved to be suitable for high dimensional problems, for instance, document classification \cite{Genkin2007} and microarray classification \cite{Liao2007,Kim2008}.

The multi-class logistic regression problem can be described as follows. Let $\{(x_i,y_i): i = 1,...,n\}$ be a training set with observation vectors $x_i\in\mathbb{R}^d$ and labels $y_i\in \{1,...,Q\}$ where $Q$ is the number of classes. Let $W$ be the $d\times Q$ matrix whose columns are $W_{:,1},...,W_{:,Q}$ and $b=(b_1,...,b_Q)\in\mathbb{R}^Q$. The couple $(W_{:,i},b_i)$ forms the hyperplane $f_i:=W_{:,i}^Tx + b_i$+ that separates the class $i$ from the other classes.

In the multi-class logistic regression problem, the conditional probability $p(Y=y|X=x)$ that an instance $x$ belongs to a class $y$ is defined as
\begin{equation}
p(Y=y|X=x) = \frac{\exp(b_y+W_{:,y}^Tx)}{\sum\limits_{k=1}^Q\exp(b_k+W_{:,k}^Tx)}.
\end{equation}
We aim to find $(W,b)$ for which the total probability of the training observations $x_i$ belonging to its correct classes $y_i$ is maximized. A natural way to estimate $(W,b)$ is to minimize the negative log-likelihood function which is defined by
\begin{equation}
\mathcal{L}(W,b) := \frac{1}{n}\sum\limits_{i=1}^n\ell(x_i,y_i,W,b)
\end{equation}
where $\ell(x_i,y_i,W,b)  = -\log p(Y=y_i|X=x_i)$. Moreover, in high-dimensional settings, there are many irrelevant and/or redundant features. 
Hence, we need to select important features in order to reduce overfitting of the training data. 
A feature $j$ is to be removed if and only if all components in the row $j$ of $W$ are zero. Therefore, it is reasonable to consider rows of $W$ as groups. Denote by $W_{j,:}$ the $j$-th row of the matrix $W$. The $\ell_{q,0}$-norm of $W$, i.e., the number of non-zero rows of $W$, is defined by 
\begin{equation*}
\|W\|_{q,0} = |\{j\in\{1,...,d\}:\|W_{j,:}\|_q \neq 0\}|.
\end{equation*}
Hence, the $\ell_{q,0}$ regularized multi-class logistic regression problem is formulated as follows
\begin{equation}\label{slr}
\min_{W,b}\left\{ \frac{1}{n}\sum_{i=1}^n\ell(x_i,y_i,W,b) + \lambda\|W\|_{q,0}\right\}.
\end{equation}

In this application, we use a non-convex approximation of the $\ell_{q,0}$-norm based on the following two penalty functions $\eta_\alpha(s)$:
\begin{eqnarray*}
\text{Exponential:} & 
\eta^{\text{exp}}_\alpha(s) &= 1-\exp(-\alpha s),
\\
\text{Capped-$\ell_1$:} &
\eta^{\text{cap-$\ell_1$}}_\alpha(s) &= \min\{1, \alpha s\}.
\end{eqnarray*}
These penalty functions have shown their efficiency in several problems, for instance, individual variables selection in SVM \cite{Bradley98,LT08}, sparse optimal scoring problem \cite{LT16}, sparse covariance matrix estimation problem \cite{phan2017}, and bi-level/group variables selection \cite{LeThi2019,Phan2019}. The corresponding approximate problem of \eqref{slr} takes the form:
\begin{equation}\label{aslr}
\min_{W,b}\left\{ \frac{1}{n}\sum_{i=1}^n\ell(x_i,y_i,W,b) + \lambda\sum_{j=1}^d\eta_\alpha(\|W_{j,:}\|_q)\right\}.
\end{equation}
Since $\eta_\alpha$ is increasing on $[0,+\infty)$, the problem \eqref{aslr} can be equivalently reformulated as follows
\begin{equation}\label{aslrequivalent}
\min_{(W,b,t)}\left\{ \frac{1}{n}\sum_{i=1}^n\left[\ell(x_i,y_i,W,b) + \chi_\Omega(W,b,t) +  \lambda\sum_{j=1}^d\eta_\alpha(t_j)\right]\right\},
\end{equation}
where $\Omega = \{(W,b,t)\in \mathbb{R}^{d\times Q}\times\mathbb{R}^Q\times\mathbb{R}^d: \|W_{j,:}\|_q \leq t_j, j = 1,...,d\}$. 
Moreover, as $\ell(x_i,y_i,W,b)$ is differentiable with $L$-Lipschitz continuous gradient and $\eta_\alpha$ is concave, the problem \eqref{aslrequivalent} takes the form of \eqref{sum-model} where the function $F_i(W,b,t)$ is given by
\begin{equation*}
F_i(W,b,t) = \ell(x_i,y_i,W,b) + \chi_\Omega(W,b,t) + \lambda\sum_{j=1}^d\eta_\alpha(t_j):= g_i(W,b,t) - h_i(W,b,t),
\end{equation*}
where the DC components $g_i$a and $h_i$ are defined by
\begin{align*}
g_i(W,b,t) &= \frac{\rho}{2}\|(W,b)\|^2 +\chi_\Omega(W,b,t),\\
h_i(W,b,t) &= \frac{\rho}{2}\|(W,b)\|^2 - \ell(x_i,y_i,W,b) - \lambda\sum_{j=1}^d\eta_\alpha(t_j),
\end{align*}
with $\rho >L$.

Before presenting SDCA for solving the problem~\eqref{aslrequivalent}, let us show how to apply standard DCA on this problem.

\subsection{Standard DCA for solving the problem \eqref{aslrequivalent}}

We consider three norms corresponding to $q \in \{1,2,\infty\}$. 
DCA applied to \eqref{aslrequivalent} consists of computing,
at each iteration $l$, 
$(U^l,v^l,z^l)\in\partial H(W^l,b^l,t^l)$, and solving the convex sub-problem
\begin{equation}\label{convex_DCA}
\min_{(W,b,t)}\left\{\frac{\rho}{2}\|(W,b)\|^2 +\chi_\Omega(W,b,t) - \langle U^l, W\rangle - \langle v^l, b\rangle - \langle z^l, t\rangle \right\}.
\end{equation}
The computation of $(U^l,v^l,z^l)$ is explicitly defined as follows.
$$(U^l,v^l,z^l) = \frac{1}{n}\sum_{i=1}^n(U_i^l,v_i^l,z_i^l), (U_i^l,v_i^l,z_i^l)\in\partial h_i(W^l,b^l,t^l).$$
More precisely
\begin{equation}\label{gradiensdca}
\begin{aligned}
(U_i^l)_{:,k} &= \rho W_{:,k}^l - \left(p_k^l(x_i) - \delta_{ky_i}\right)x_i, k = 1,...Q,\\
(v_i^l)_k &= \rho b_k^l - \left(p_k^l(x_i) - \delta_{ky_i}\right), k = 1,...Q,\\
(z_i^l)_j & = \left\{\begin{matrix}
-\lambda\alpha\exp(-\alpha t_j^l), &j = 1,\dots,d 
&\text{ if } \eta_\alpha= \eta^{\text{exp}}_\alpha,
\\ 
-\lambda\alpha \text{ if } \alpha t_j^l \leq 1, \text{ and }0 \text { otherwise}, 
&j = 1,\dots,d,
&\text{ if } \eta_\alpha = \eta^{\text{cap}-\ell_1}_\alpha,
\end{matrix}\right.
\end{aligned}
\end{equation}
with $p^l_k(x_i)=\exp(b_k^l+(W^l_{:,k})^Tx_i)/(\sum_{h=1}^Qb_h^l+(W^l_{:,h})^Tx_i))$, $\delta_{ky_i}= 1$ if $k=y_i$ and $0$ otherwise. 

The convex sub-problem~\eqref{convex_DCA} can be solved as follows (note that $z_j^l \leq 0$ for $j = 1, \dots, d)$
\begin{align}
W^{l+1} &= \arg\min_{W}\left\{\frac{\rho}{2}\|W\|^2 +\sum_{j=1}^d(-z_j^l)\|W_{j,:}\|_q - \langle U^l, W\rangle  \right\},\label{convex_DCA2}\\
b^{l+1} &=\arg\min_{b}\left\{\frac{\rho}{2}\|b\|^2- \langle v^l, b\rangle \right\} = \frac{1}{\rho}v^l, \label{comute_b}\\
t_j^{l+1}& = \|W_{j,:}^{l+1}\|_q, j = 1,...,d. \label{comute_t}
\end{align}
Since the problem~\eqref{convex_DCA2} is separable in rows of $W$, solving it amounts to solving $d$ independent sub-problems
\begin{equation*}\label{sub_convex1}
W_{j,:}^{l+1}=\arg\min_{W_{j,:}}\left\{\frac{\rho}{2}\|W_{j,:}\|^2 + (-z_j^l)\|W_{j,:}\|_q - \langle U^l_{j,:}, W_{j,:}\rangle  \right\}.
\end{equation*}
Moreover, $W_{j,:}^{l+1}$ is computed via the following proximal operator
\begin{equation*}\label{DCA_eq1}
W_{j,:}^{l+1} = \prox_{(-z_j^l)/\rho\|\cdot\|_q}\left(U^l_{j,:}/\rho\right),
\end{equation*}
where the proximal operator $\prox_f(\nu)$ is defined by
\begin{equation*}
\prox_f(\nu) = \argmin_t\left\{\frac{1}{2}\|t-\nu\|^2 + f(t)\right\}.
\end{equation*}
The proximal operator of $(-z_j^l)/\rho\|\cdot\|_q$ can be efficiently computed \citep{Parikh14}. The computation of $\prox_{(-z_j^l)/\rho\|.\|_q}\left(\nu/\rho\right)$ can be summarized in Table \ref{prox}.
\begin{table}[bt]
\caption{Computation of $W^{l+1}_{j,:}= \prox_{(-z_j^l)/\rho\|.\|_q}\left(U^l_{j,:}/\rho\right)$ corresponding to $q\in\{1,2,\infty\}$.}
\label{prox} 
\vskip 0.15in 
\begin{center}
\begin{tabular}{ll}
\hline
\noalign{\smallskip} 
$q$ &  $\prox_{(-z_j^l)/\rho\|.\|_q}\left(U^l_{j,:}/\rho\right)$\\ 

\noalign{\smallskip}\hline
 
\noalign{\smallskip}
$1$ &  $\left(|U^l_{j,:}|/\rho - (-z_j^l)/\rho\right)_+\circ\text{sign}(U^l_{j,:})$\\
\noalign{\smallskip}\noalign{\smallskip}
$2$ & $\begin{cases}
\left(1-\frac{-z_j^l}{\|U^l_{j,:}\|_2}\right)U^l_{j,:}/\rho\ &\text{if}\ \|U^l_{j,:}\|_2 > -z_j^l\\
0\ &\text{if}\ \|U^l_{j,:}\|_2 \leq -z_j^l.
\end{cases}$\\

\noalign{\smallskip}\noalign{\smallskip} 
$\infty$ &  $\begin{cases}
U^l_{j,:}/\rho - \left(\frac{1}{-z_j^l}|U^l_{j,:}|-\delta\right)_+\circ \text{sign}(U^l_{j,:}) \ &\text{if}\ \|U^l_{j,:}\|_1 > -z_j^l\\
0\ &\text{if}\ \|U^l_{j,:}\|_1 \leq -z_j^l,
\end{cases}$\\

 & where $\delta$ satisfies $\sum_{k=1}^Q\left(\frac{1}{-z_j^l}|U^l_{j,k}|-\delta\right)_+ = 1.$ \\

\noalign{\smallskip}\hline
\end{tabular}
\end{center}
\end{table}
DCA based algorithms for solving \eqref{aslrequivalent} with $q\in\{1,2,\infty\}$ are described as follows.

\noindent\makebox[\linewidth]{\rule{\textwidth}{0.1pt}}
\label{DCA-Full}
\textbf{DCA-$\ell_{q,0}$:} DCA for solving \eqref{aslrequivalent} with $q\in\{1,2,\infty\}$\\
\vskip -0.25in
\noindent\makebox[\linewidth]{\rule{\textwidth}{0.1pt}}
\vskip -0.05in
\begin{algorithmic}
   \State {\bfseries Initialization:} Choose $(W^0,b^0)\in\mathbb{R}^{d\times Q}\times\mathbb{R}^Q$, $\rho > L$ and $l\leftarrow 0$.
   \State {\bfseries Repeat}
   \State\quad\quad 1. Compute $(U^l,v^l,z^l) = \frac{1}{n}\sum_{i=1}^n(U_i^l,v_i^l,z_i^l)$, where $(U_i^l,v_i^l,z_i^l)$, $i=1,...,n$ are defined in
    \eqref{gradiensdca}.
   
   \State\quad\quad 2. Compute $(W^{l+1},b^{l+1},t^{l+1})$ 
    according to Table~\ref{prox}, \eqref{comute_b} and \eqref{comute_t}, respectively.
   
	\State\quad\quad 3. $l\leftarrow l+1$.
    \State{\bfseries Until} Stopping criterion.
\end{algorithmic}
\vskip -0.1in
\noindent\makebox[\linewidth]{\rule{\textwidth}{0.1pt}}\\

\subsection{SDCA for solving the problem \eqref{aslrequivalent}}
In SDCA, at each iteration $l$, we have to compute $(U_i^l,v_i^l,z_i^l)\in\partial h_i(W^l,b^l,t^l)$ for $i\in s_l$ and keep $(U_i^l,v_i^l,z_i^l)=(U_i^{l-1},v_i^{l-1},z_i^{l-1})$ for $i\notin s_l$, where $s_l$ is a randomly chosen subset of the indexes, and solve the convex sub-problem taking the form of \eqref{convex_DCA}. 
Hence, SDCA for solving~\eqref{aslrequivalent} is described below.

\noindent\makebox[\linewidth]{\rule{\textwidth}{0.1pt}}
\textbf{SDCA-$\ell_{q,0}$:} SDCA for solving \eqref{aslrequivalent} with $q\in\{1,2,\infty\}$\\
\vskip -0.25in
\noindent\makebox[\linewidth]{\rule{\textwidth}{0.1pt}}
\vskip -0.05in
\begin{algorithmic}
   \State {\bfseries Initialization:} Choose $(W^0,b^0)\in\mathbb{R}^{d\times Q}\times\mathbb{R}^Q$, $t^0_j = \|W_{j,:}^0\|_q$, $\rho > L$, $s_0=\{1,...,n\}$ and $l\leftarrow 0$.
   \State {\bfseries Repeat}
    \State\quad\quad 1. Compute $(U_i^l,v_i^l,z_i^l)$ by \eqref{gradiensdca} if $i\in s_l$ and keep $(U_i^l,v_i^l,z_i^l)=(U_i^{l-1},v_i^{l-1},z_i^{l-1})$ if $i\notin s_l$. Set $(U^l,v^l,z^l) = \frac{1}{n}\sum_{i=1}^n(U_i^l,v_i^l,z_i^l)$.
      \State\quad\quad 2. Compute $(W^{l+1},b^{l+1},t^{l+1})$ 
    according to Table~\ref{prox}, \eqref{comute_b} and \eqref{comute_t}, respectively.
   
	\State\quad\quad 3. $l\leftarrow l+1$ and randomly choose a small subset $s_l\subset\{1,...,n\}$.
    \State{\bfseries Until} Stopping criterion.
\end{algorithmic}
\vskip -0.1in
\noindent\makebox[\linewidth]{\rule{\textwidth}{0.1pt}}\\
\section{Numerical Experiment}\label{sec:experiment}


\subsection{Datasets} \label{sec:dataset}
To evaluate the performances of algorithms, we performed numerical experiments on two types of data: real datasets (\textit{covertype}, \textit{madelon}, \textit{miniboone}, \textit{protein}, \textit{sensit} and \textit{sensorless}) and simulated datasets (\textit{sim\_1}, \textit{sim\_2} and \textit{sim\_3}). All real-world datasets are taken from the well-known UCI and LibSVM data repositories. We give below a brief description of real datasets:
\begin{itemize}
\item \textit{covertype} belongs to the Forest Cover Type Prediction from strictly cartographic variables challenge\footnote{\url{https://archive.ics.uci.edu/ml/datasets/Covertype}}. It is a very large dataset containing $581,012$ points described by $54$ variables.
\item \textit{madelon} is one of five datasets used in the NIPS 2003 feature selection challenge\footnote{\url{https://archive.ics.uci.edu/ml/datasets/Madelon}}. The dataset contains $2600$ points, each point is represented by $500$ variables. Among $500$ variables, there are only $5$  informative variables and $15$ redundant variables (which are created by linear combinations of $5$ informative variables). The $480$ others variables were added and have no predictive power. Notice that \textit{madelon} is a highly non-linear dataset.
\item \textit{miniboone} is taken form the MiniBooNE experiment to observe neutrino oscillations\footnote{\url{https://archive.ics.uci.edu/ml/datasets/MiniBooNE+particle+identification}}, containing $130,065$ data points.	
\item \textit{protein}  \footnote{\url{https://www.csie.ntu.edu.tw/~cjlin/libsvmtools/datasets/multiclass.html} \label{dataset_liblinear}} is a dataset for classifying protein second structure state ($\alpha$, $\beta$, and coil) of each residue in amino acid sequences, including $24,387$ data points.
\item \textit{sensit} \textsuperscript{\ref{dataset_liblinear}} dataset obtained from distributed sensor network for vehicle classification. It consists of $98,528$ data points categorized into 3 classes: Assault Amphibian Vehicle (AAV), Dragon Wagon (DW) and noise. 

\item \textit{sensorless} measures electric current drive signals from different operating conditions, which is classified into 11 different classes \footnote{\url{https://archive.ics.uci.edu/ml/datasets/Dataset+for+Sensorless+Drive+Diagnosis}}. It is a huge dataset, which contains $58,509$ data points, described by $48$ variables.
\end{itemize}

We generate three synthetic datasets (\textit{sim\_1}, \textit{sim\_2} and \textit{sim\_3}) by the same process proposed in \cite{Witten2011}. 
In the first dataset (\textit{sim\_1}), variables are independent and have different means in each class. In dataset (\textit{sim\_2}), variables also have different means in each class, but they are dependent. 
The last synthetic dataset (\textit{sim\_3}) has different one-dimensional means in each class with independent variables. 
Detail produces to generate three simulated datasets are described as follows:

\begin{itemize}
\item For \textit{sim\_1}: we generate a four-classes classification problem. Each class is assumed to have a multivariate normal distribution $\mathcal{N}(\mu_k,I)$, $k=1,2,3,4$ with dimension of $d=50$. The first $10$ components of $\mu_1$ are $0.5$, $\mu_{2j}=0.5$ if $11\leq j\leq 20$, $\mu_{3j}=0.5$ if $21\leq j\leq 30$, $\mu_{4j}=0.5$ if $31\leq j\leq 40$ and $0$ otherwise. We generate $250,000$ instances with equal probabilities.
\item For \textit{sim\_2}: this synthetic dataset contains three classes of multivariate normal distributions $\mathcal{N}(\mu_{k},\Sigma)$, $k = 1, 2, 3$, each of dimension $d=50$. 
The components of $\mu_1 = 0$, $\mu_{2j} = 0.4$ and $\mu_{3j}=0.8$ if $j\leq 40$ and $0$ otherwise. 
The covariance matrix $\Sigma$ is the block diagonal matrix with five blocks of dimension $10\times 10$ whose element $(j,j')$ is $0.6^{|j-j'|}$. We generate $150,000$ instances.
\item For \textit{sim\_3}: this synthetic dataset consists of four classes. For class $k=1,2,3,4$, 
$i\in C_k$ then $X_{ij} \sim \mathcal{N}(0,1)$ for $j > 100$, 
and 
$X_{ij} \sim \mathcal{N}(\frac{k-1}{3},1)$ otherwise, 
where 
$\mathcal{N}(\mu,\sigma^2)$ denotes the Gaussian distribution with mean $\mu$ and variance $\sigma^2$. 
We generate $62,500$ data points for each class.
\end{itemize}

The number of points, variables and classes of each dataset are summarized in the first column of Table~\ref{tbl.experiment}.


\subsection{Comparative algorithms}\label{sec:comparative_algo}

To the best of our knowledge, there is no existing method in the literature for solving the group variable selection in multi-class logistic regression using $\ell_{q,0}$ regularization. However, closely connected to the Lasso ($\ell_1$-norm), \cite{Vincent2014} proposed to use the convex regularization $\ell_{2,1}$ instead of $\ell_{2,0}$. Thus, the resulting problem takes the form
\begin{equation}
\label{logisitic_l21}
\min\limits_{W,b}\left\{ \frac{1}{n}\sum\limits_{i=1}^n\ell(x_i,y_i,W,b) + \lambda\|W\|_{2,1}\right\} .
\end{equation}
A coordinate gradient descent, named \texttt{msgl}, was proposed in \cite{Vincent2014} to solve the problem \eqref{logisitic_l21}. 
\texttt{msgl} is a comparative algorithm in our experiment.

On another hand, we are interested in a comparison between our algorithms and a stochastic based method. A stochastic gradient descent algorithm to solve~\eqref{logisitic_l21}, named $\texttt{SPGD-\textit{$\ell_{2, 1}$}}$,  is developed for this purpose. $\texttt{SPGD-\textit{$\ell_{2, 1}$}}$ is described as follows.


\noindent\makebox[\linewidth]{\rule{\textwidth}{0.1pt}}
\textbf{SPGD-$\ell_{2,1}$:} Stochastic Proximal Gradient Descent for solving \eqref{logisitic_l21}\\
\vskip -0.25in
\noindent\makebox[\linewidth]{\rule{\textwidth}{0.1pt}}
\vskip -0.05in
\begin{algorithmic} \label{SPGD-algo}
    \State {\bfseries Initialization:} Choose $(W^0,b^0)\in\mathbb{R}^{d\times Q}\times\mathbb{R}^Q$, and $l\leftarrow 0$.
   \State {\bfseries Repeat}
   \State\quad\quad 1. Randomly choose a small subset $s_l \subset\{1,...,n\}$.  
   Set $\alpha_{l} = \frac{n}{10 l}$. 
   Compute $\bar{U}_{:,k}^l = W_{:,k}^l - \frac{\alpha_{l}}{|s_{l}|}\sum_{i \in s_{l}} \left(p_k^l(x_i) - \delta_{ky_i}\right)x_i, k = 1,...Q$.
   
   \State\quad\quad 2. Compute $(W^{l+1},b^{l+1})$ by
   \begin{equation}\label{spgd_solution}
	\begin{aligned}
W^{l+1}_{j,:} &= \left( \| \bar{U}^l_{j,:} \|_2 - \alpha_{l} \lambda \right)_+ \frac{\bar{U}^l_{j,:} }{\| \bar{U}^l_{j,:}  \|_2}, j = 1, ..., d\\
b^{l+1}_{k} &= b^{l}_{k}  - \frac{\alpha_{l}}{|s_{l}|} \sum_{i \in s_l} \left(p_k^l(x_i) - \delta_{ky_i}\right), k = 1,..., Q.
	\end{aligned}
   \end{equation}

	\State\quad\quad 3. $l\leftarrow l+1$.
    \State{\bfseries Until} Stopping criterion.
    
\end{algorithmic}
\vskip -0.1in
\noindent\makebox[\linewidth]{\rule{\textwidth}{0.1pt}}\\

\subsection{Experiment setting}

We randomly split each dataset into a training set and a test set. The training set contains 80\% of the total number of points and the remaining 20\% are used as test set.

In order to evaluate the performance of algorithms, we consider the following three criteria: the classification accuracy (percentage of well classified point on test set), the sparsity of obtained solution and the running time (measured in seconds). The sparsity is computed as the percentage of selected variables. Note that a variable $j \in \left\lbrace 1, \ldots, d \right\rbrace$ is considered to be removed if all components of the row $j$ of $W$ are smaller than a threshold, i.e., $\left|W_{j,i}\right| \le 10^{-8}, \forall i \in {1, \ldots, Q}$. We perform each algorithm 10 times and report the mean and standard deviation of each criterion.

We use the early-stopping condition for \texttt{SDCA} and \texttt{SPGD-$\ell_{2, 1}$}. Early-stopping is a well-know technique in machine learning, especially in stochastic learning which permits to avoid the over-fitting in learning. More precisely, after each epoch, we compute the classification accuracy on a validation set which contains 20\% randomly chosen data points of training set. We stop \texttt{SDCA} and \texttt{SPGD-$\ell_{2, 1}$} if the classification accuracy is not improved after $n_{patience} = 5$ epochs. The batch size of stochastic algorithms (\texttt{SDCA} and \texttt{SPGD-$\ell_{2, 1}$}) is set to $10\%$. \texttt{DCA} is stopped if the difference between two consecutive objective functions is smaller than a threshold $\epsilon_{stop} = 10^{-6}$. For \textit{msgl}, we use its default stopping parameters as in \citep{Vincent2014}. We also stop algorithms if they exceed $2$ hours of running time in the training process.

The parameter $\alpha$ for controlling the tightness of zero-norm approximation is chosen in the set  $\left\lbrace 0.5, 1, 2, 5\right\rbrace$. 
We use the solution-path procedure for the trade-off parameter $\lambda$. Let $\lambda_{1} > \lambda_{2} > ... > \lambda_{l}$ be a decreasing sequence of $\lambda$. 
At step $k$, we solve the problem \eqref{slr} with $\lambda = \lambda_k$ from the initial point chosen as the solution of the previous step $k-1$. Starting with a large value of $\lambda$, we privilege the sparsity of solution (i.e. selecting very few variables) over the classification ability. Then by decreasing the value $\lambda$ decreases, we select more variables in order to increase the classification accuracy. 
In our experiments, the sequence of $\lambda$ is set to $\{10^4, 3\times 10^3, 10^3,\ldots,3\times 10^{-3}, 10^{-3}\}$.

All experiments are performed on a PC Intel (R) Xeon (R) E5-2630 v2 @2.60 GHz with 32GB RAM. 

\subsection{Experiment 1} \label{exp_1}

In this experiments, we study the effectiveness of \texttt{SDCA}. For this purpose, we choose the $\ell_{2,0}$ regularization, and perform a comparison between \texttt{SDCA-$\ell_{2, 0}$-exp} and \texttt{DCA-$\ell_{2, 0}$-exp}. Furthermore, we will compare \texttt{SDCA-$\ell_{2, 0}$-exp} with \texttt{msgl} and \texttt{SPGD-$\ell_{2, 1}$}, two algorithms for solving the multi-class logistic regression using $\ell_{2, 1}$ regularization (c.f Section~\ref{sec:comparative_algo}).

The comparative results between are reported in Table~\ref{tbl.experiment} and Figure~\ref{fig:exp_1}. Note that the running time is plotted in logarithmic scale.

\begin{figure}[tbh!]
	\centering
\subfigure{	
	\includegraphics[width=\linewidth]{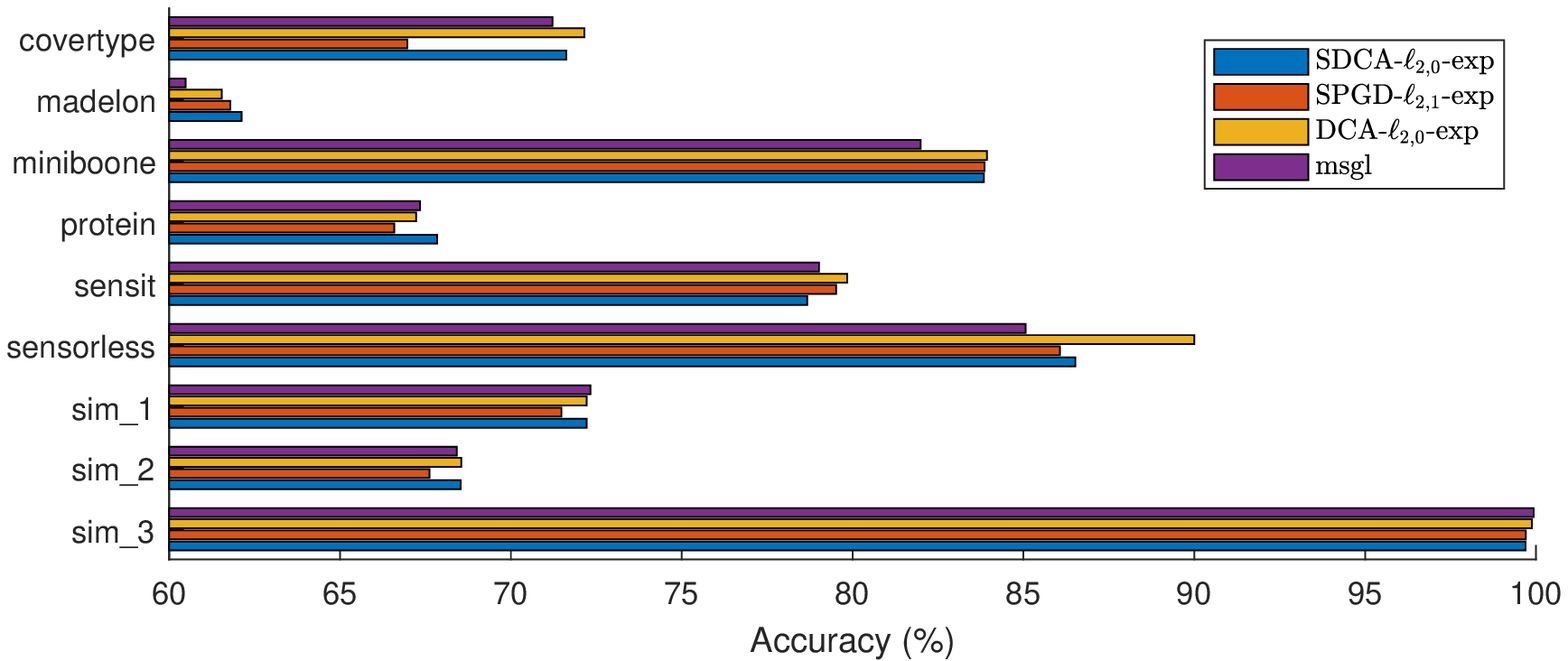}	
}
\subfigure{	
	\includegraphics[width=\linewidth]{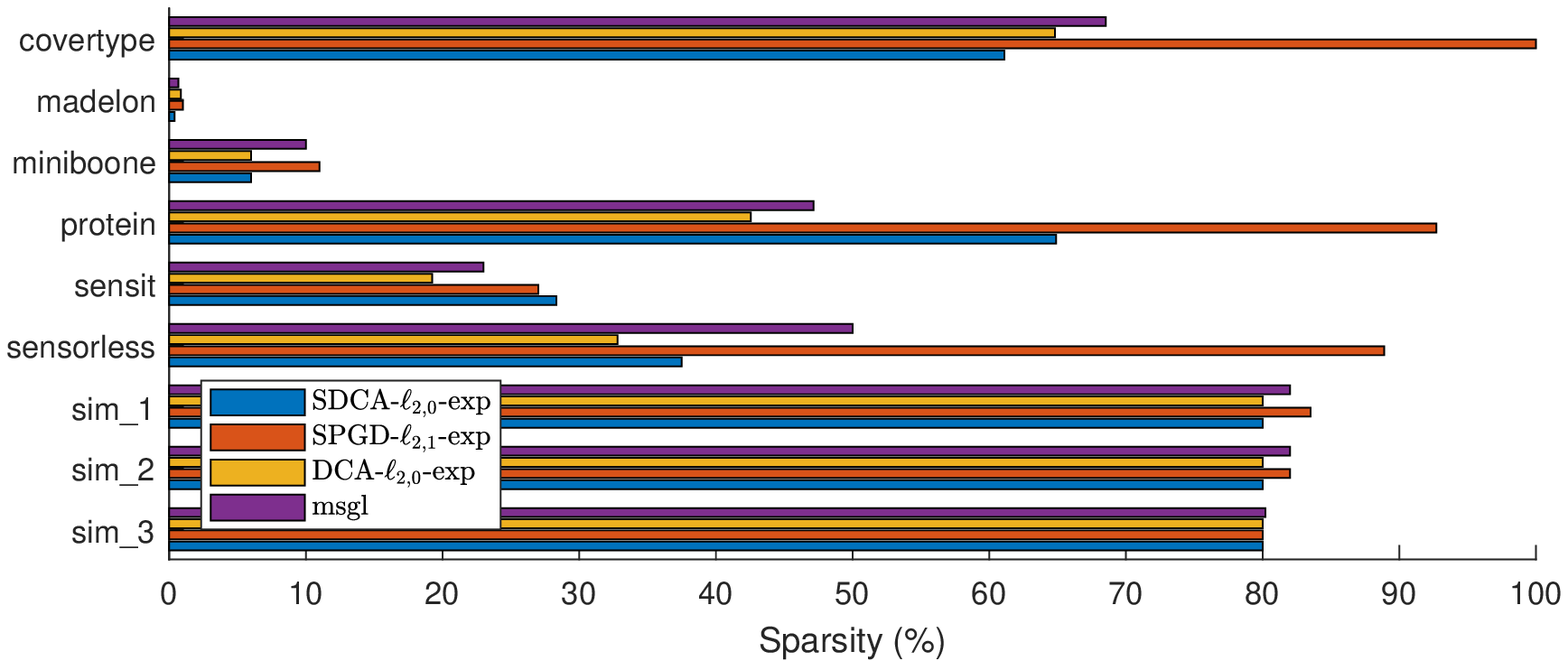}	
}
\subfigure{	
	\includegraphics[width=\linewidth]{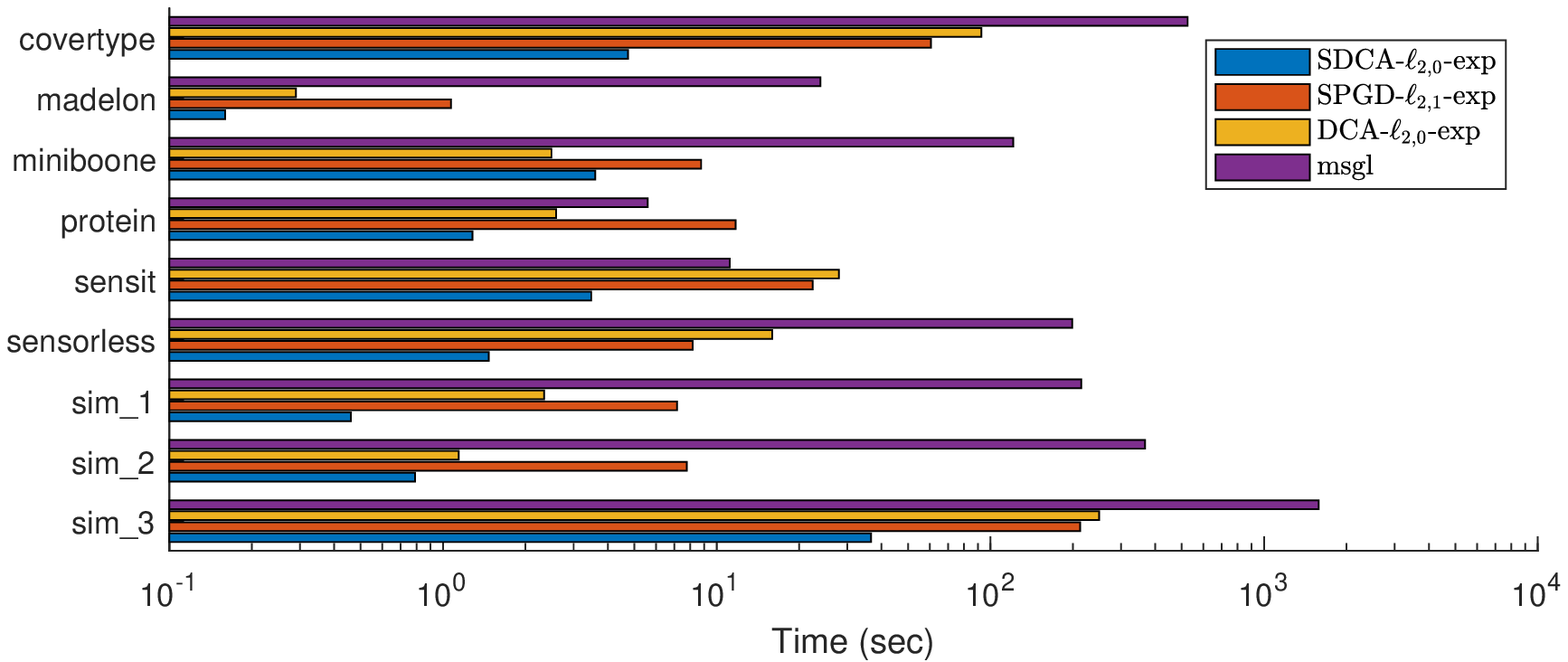} 	
}
\caption{Comparative results between \texttt{SDCA-$\ell_{2, 0}$-exp}, \texttt{DCA-$\ell_{2, 0}$-exp}, \texttt{SPGD-$\ell_{2, 1}$} and \texttt{msgl} (running time is plotted on a logarithmic scale).}
\label{fig:exp_1}
\end{figure}

\noindent \textbf{Comparison between \texttt{SDCA-$\ell_{2, 0}$-exp} and \texttt{DCA-$\ell_{2, 0}$-exp}}

\sloppy In term of classification accuracy, \texttt{SDCA-$\ell_{2, 0}$-exp} produces fairly similar result comparing with \texttt{DCA-$\ell_{2, 0}$-exp}. 
\texttt{DCA-$\ell_{2, 0}$-exp} is better than \texttt{SDCA-$\ell_{2, 0}$-exp} on $4$ datasets (\textit{covertype}, \textit{sensit}, \textit{sensorless} and \textit{sim\_3}) while \texttt{SDCA-$\ell_{1, 0}$-exp} gives better results on $2$ datasets (\textit{madelon} and \textit{protein}). The two biggest gaps ($3.49\%$ and $1.17\%$) occur on dataset \textit{sensorless} and \textit{sensit} respectively.

\sloppy As for the sparsity of solution, \texttt{DCA-$\ell_{2, 0}$-exp} and \texttt{SDCA-$\ell_{2, 0}$-exp} provide the same results on $4$ datasets (\textit{miniboon}, \textit{sim\_1}, \textit{sim\_2} and \textit{sim\_3}). \texttt{DCA-$\ell_{2, 0}$-exp} suppresses more variables than \texttt{SDCA-$\ell_{2, 0}$-exp} on $3$ datasets (\textit{protein}, \textit{sensit} and \textit{sensorless}), while \texttt{SDCA-$\ell_{2, 0}$-exp} gives better sparsity on \textit{covertype} and \textit{madelon}. The gain of \texttt{DCA-$\ell_{2, 0}$-exp} on this criterion is quite high, up to $22.3\%$ on dataset \textit{protein}.

\sloppy Concerning the running time, \texttt{SDCA-$\ell_{2, 0}$-exp} clearly outperforms \texttt{DCA-$\ell_{2, 0}$-exp}. 
Except for \textit{miniboone} where \texttt{DCA-$\ell_{2, 0}$-exp} is $1.11$ second faster, the gain of \texttt{SDCA-$\ell_{2, 0}$-exp} is huge.  \texttt{SDCA-$\ell_{2, 0}$-exp} is up to $19.58$ times faster than \texttt{DCA-$\ell_{2, 0}$-exp} (dataset \textit{covertype}).

\sloppy Overall, \texttt{SDCA-$\ell_{2, 0}$-exp} is able to achieve equivalent classification accuracy with a running time much smaller than \texttt{DCA-$\ell_{2, 0}$-exp}. 

\noindent \textbf{Comparison between \texttt{SDCA-$\ell_{2, 0}$-exp} and \texttt{msgl}}.

\texttt{SDCA-$\ell_{2, 0}$-exp} provides better classification accuracy on $6$ out of $9$ datasets with a gain up to $1.85\%$. For the $3$ remaining datasets, the gain of \texttt{msgl} in accuracy is smaller than $0.3\%$. As for the sparsity of solution, the two algorithms are comparable. \texttt{SDCA-$\ell_{2, 0}$-exp} is by far faster than \texttt{msgl} on all datasets, from $3.2$  times to $470$ time faster. 

\noindent \textbf{Comparison between \texttt{SDCA-$\ell_{2, 0}$-exp} and \texttt{SPGD-$\ell_{2, 1}$}}.

In term of classification accuracy, SDCA is better on $6$ datasets with a gain up to $4.65\%$, whereas SPGD only gives better result on \textit{sensit}. Moreover, the number of selected variables by \texttt{SPGD-$\ell_{2, 1}$} is considerably higher. 
\texttt{SPGD-$\ell_{2, 1}$} chooses from $2\%$ to $51.39\%$  more variables than \texttt{SDCA} in $6$ over $9$ cases (\textit{covertype}, \textit{miniboone}, \textit{protein},  \textit{sensorless}, \textit{sim\_1}, and \textit{sim\_2}), and $> 27\%$ more in $3$ over $9$ cases (\textit{covertype}, \textit{protein} and \textit{sensorless}). As for the running time, \texttt{SDCA-$\ell_{2, 0}$-exp} is up to $15.68$ times faster than \texttt{SPGD-$\ell_{2, 1}$}. Overall, \texttt{SDCA-$\ell_{2, 0}$-exp} clearly outperforms \texttt{SPGD-$\ell_{2, 1}$} on all three criteria. 

In conclusion, as expected, \texttt{SDCA-$\ell_{2, 0}$-exp} reduces considerably the running time of \texttt{DCA-$\ell_{2, 0}$-exp} while achieving equivalent classification accuracy. Moreover, \texttt{SDCA-$\ell_{2, 0}$-exp} outperforms the two related algorithms \texttt{msgl} and \texttt{SPGD-$\ell_{2, 1}$}.

\subsection{Experiment 2} \label{exp_2}

\sloppy In this experiment, in order to study the effectiveness of different non-convex regularizations $\ell_{q,0}$, we compare three algorithms \texttt{SDCA-$\ell_{1, 0}$-exp}, \texttt{SDCA-$\ell_{2, 0}$-exp} and \texttt{SDCA-$\ell_{\infty, 0}$-exp}. The results are reported in Table \ref{tbl.experiment} and plotted in Figure~\ref{fig:exp_2}.

\begin{figure}[tbh]
	\centering
    \subfigure{	
    \includegraphics[width=\linewidth]{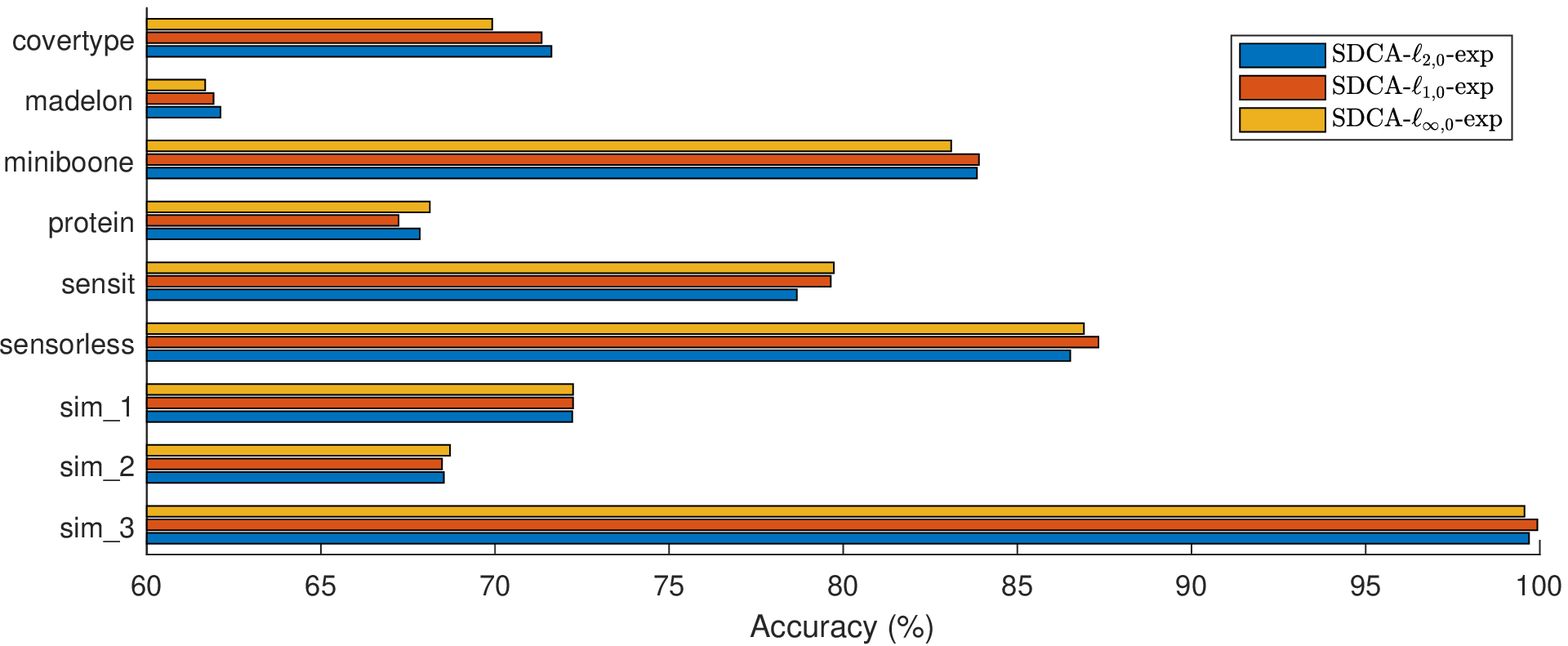} 	}
     \subfigure{	
    \includegraphics[width=\linewidth]{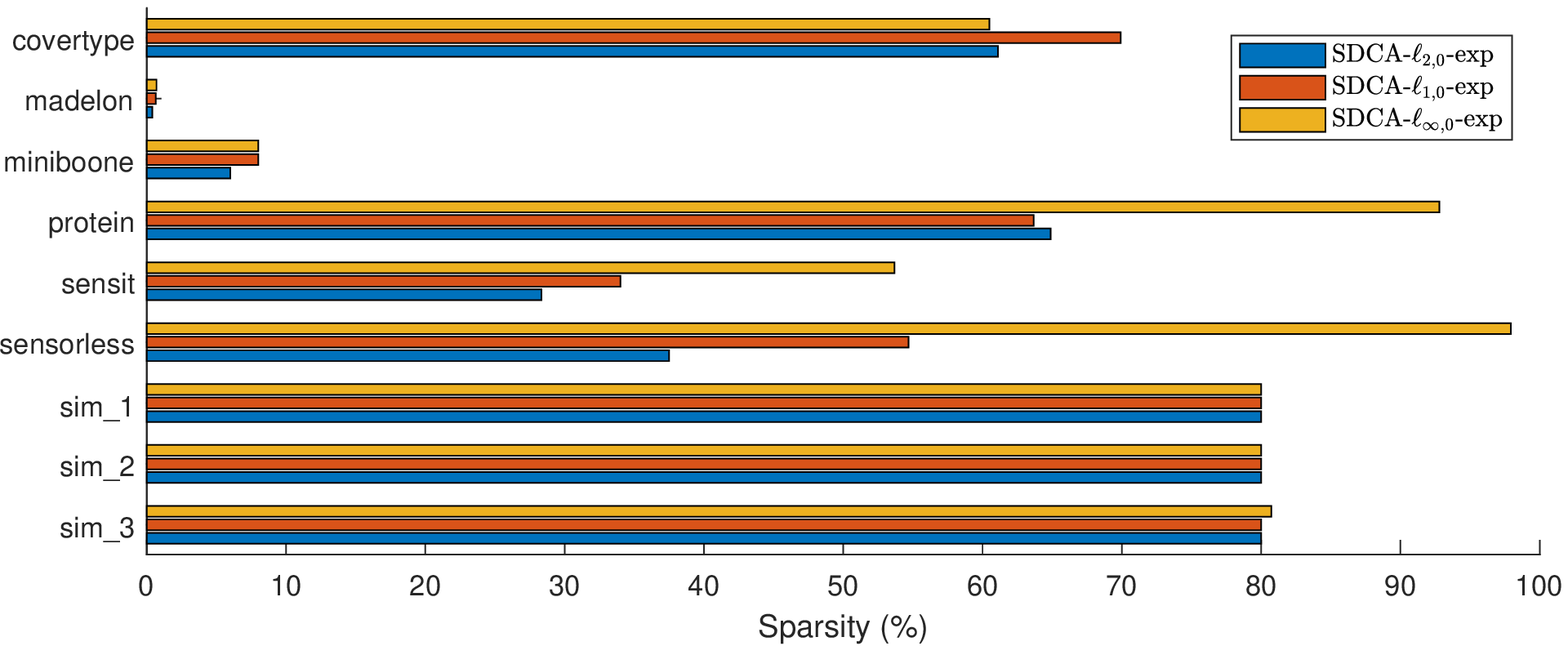} 	}
    \subfigure{	
    \includegraphics[width=\linewidth]{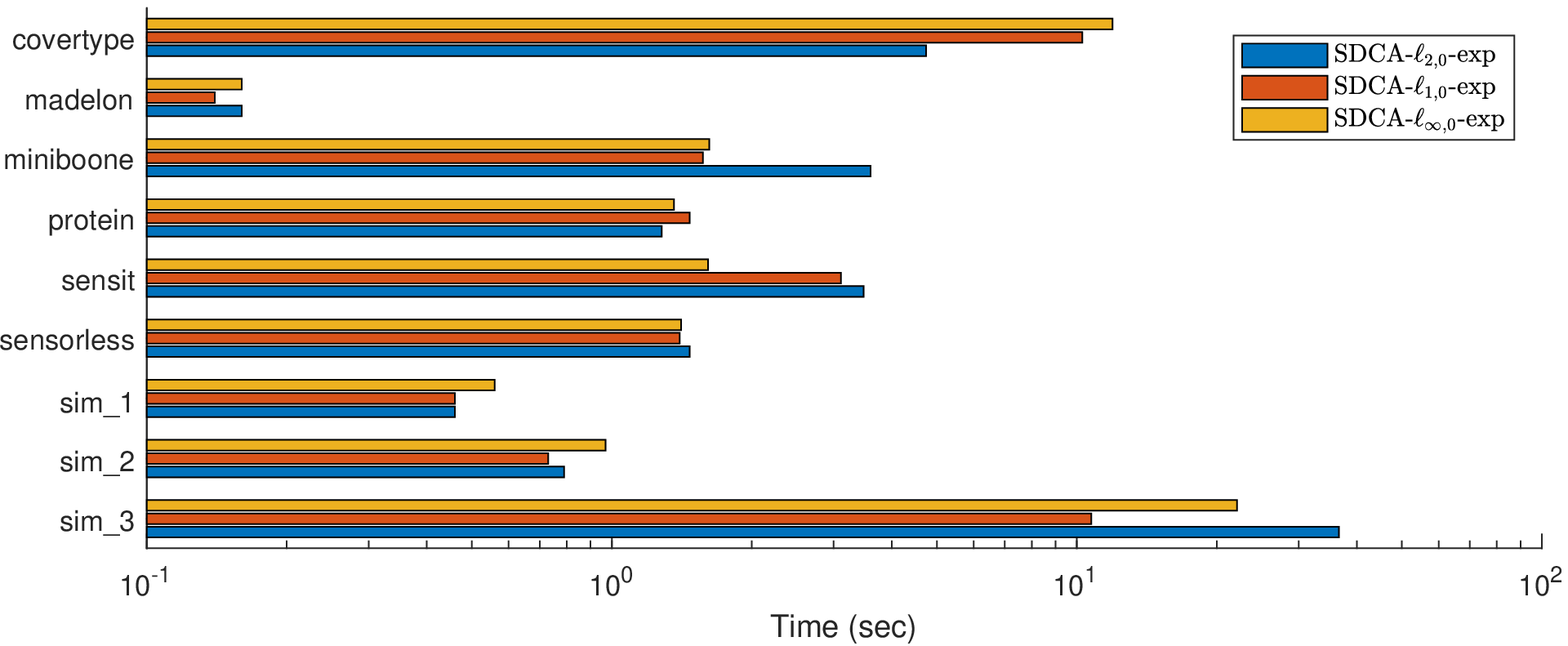} 	
}
\caption{Comparative results between \texttt{SDCA-$\ell_{1, 0}$-exp}, \texttt{SDCA-$\ell_{2, 0}$-exp} and \texttt{SDCA-$\ell_{\infty, 0}$-exp} (running time is plotted on a logarithmic scale).}
\label{fig:exp_2}
\end{figure}

\sloppy In term of classification accuracy, \texttt{SDCA-$\ell_{1,0}$-exp} and \texttt{SDCA-$\ell_{2,0}$-exp} are comparable and are slightly better than \texttt{SDCA-$\ell_{\infty, 0}$-exp}. \texttt{SDCA-$\ell_{1, 0}$-exp} produces similar results with \texttt{SDCA-$\ell_{2, 0}$-exp} on $6$ out of $9$ datasets, where the gap is lower than $0.3\%$ in classification accuracy. 
For \textit{protein}, \textit{sensorless} and \textit{sensit}, \texttt{SDCA-$\ell_{\infty, 0}$-exp} provides slightly better classification accuracy than \texttt{SDCA-$\ell_{1,0}$-exp} and \texttt{SDCA-$\ell_{2,0}$-exp}. This is due to the fact that \texttt{SDCA-$\ell_{\infty, 0}$-exp} selects much more variables than the two others.

\sloppy As for the sparsity of solution, \texttt{SDCA-$\ell_{2,0}$-exp} is the best on $8$ out of $9$ datasets (except for \textit{protein}).
\texttt{SDCA-$\ell_{1, 0}$-exp} selects moderately more variables than \texttt{SDCA-$\ell_{2, 0}$-exp}, from $5.67\%$ to $17.19\%$.
In contrast to \texttt{SDCA-$\ell_{2,0}$-exp}, \texttt{SDCA-$\ell_{\infty,0}$-exp} suppresses less variables than \texttt{SDCA-$\ell_{1,0}$-exp} and \texttt{SDCA-$\ell_{2,0}$-exp} on all datasets, except \textit{covertype}. Especially, on dataset \textit{sensorless}, \texttt{SDCA-$\ell_{\infty,0}$-exp} selects $60.42\%$ (resp. $43.23\%$) more variables than \texttt{SDCA-$\ell_{2,0}$-exp} (resp. \texttt{SDCA-$\ell_{1,0}$-exp}).

\sloppy In term of running time, \texttt{SDCA-$\ell_{1, 0}$-exp} is the fastest and \texttt{SDCA-$\ell_{2, 0}$-exp} is the slowest among the three algorithms. \texttt{SDCA-$\ell_{1, 0}$-exp} is up to $3.4$ time faster than \texttt{SDCA-$\ell_{2, 0}$-exp} and $2.06$ times faster than \texttt{SDCA-$\ell_{\infty, 0}$-exp}.

\sloppy Overall, \texttt{SDCA-$\ell_{1,0}$-exp} and \texttt{SDCA-$\ell_{2,0}$-exp} provide comparable results and realize a better trade-off between classification and sparsity of solution than \texttt{SDCA-$\ell_{\infty,0}$-exp}.


\subsection{Experiment 3} \label{exp_3}

\sloppy In this experiment, to study the effect of the approximation functions (capped-$\ell_1$ and exponential approximation), we compare two algorithms: \texttt{SDCA-$\ell_{2, 0}$-exp} and \texttt{SDCA-$\ell_{2, 0}$-cap$\ell_1$}. It is worth to note that capped-$\ell_1$  function is nonsmooth, hence the resulting approximate problem is a nonsmooth (and nonconvex) problem. The results are reported in Figure~\ref{fig:exp_3} and Table~\ref{tbl.experiment}. 

For \textit{sensit}, \textit{madelon}, \textit{sim\_1}, \textit{sim\_2} dataset, both algorithms have similar performance in all three criteria. 
The differences in terms of accuracy are negligible ($< 0.1\%$), while the gaps of sparsity and running time are mostly the same.

For \textit{sim\_3} and \textit{miniboone} dataset, both algorithms choose the same number of features.
However, \texttt{SDCA-$\ell_{2, 0}$-cap$\ell_1$} is faster than \texttt{SDCA-$\ell_{2, 0}$-exp} (by $41\%$ and $67\%$ respectively), while \texttt{SDCA-$\ell_{2, 0}$-exp} gives better (or similar) result in terms of classification accuracy.

For \textit{covertype},
\textit{sensorless} and \textit{protein} dataset,
\texttt{SDCA-$\ell_{2,0}$-exp} provides better results than \texttt{SDCA-$\ell_{2,0}$-cap$\ell_1$}.
\texttt{SDCA-$\ell_{2, 0}$-exp} furnishes results with higher classification accuracy in $2$ out of $3$ cases (\textit{covertype} and \textit{sensorless}) while having lower lower sparsity in $2$ out of $3$ cases (\textit{protein} and \textit{sensorless}).
In terms of running time, \texttt{SDCA-$\ell_{2, 0}$-exp} is faster than \texttt{SDCA-$\ell_{2, 0}$-cap$\ell_1$} by at least $1.5$ times.

Overall, \texttt{SDCA-$\ell_{2,0}$-exp} clearly shows better results \texttt{SDCA-$\ell_{2,0}$-cap$\ell_1$} in three criteria.

\begin{figure}[tbh]
	\centering
    \subfigure{	
    \includegraphics[width=\linewidth]{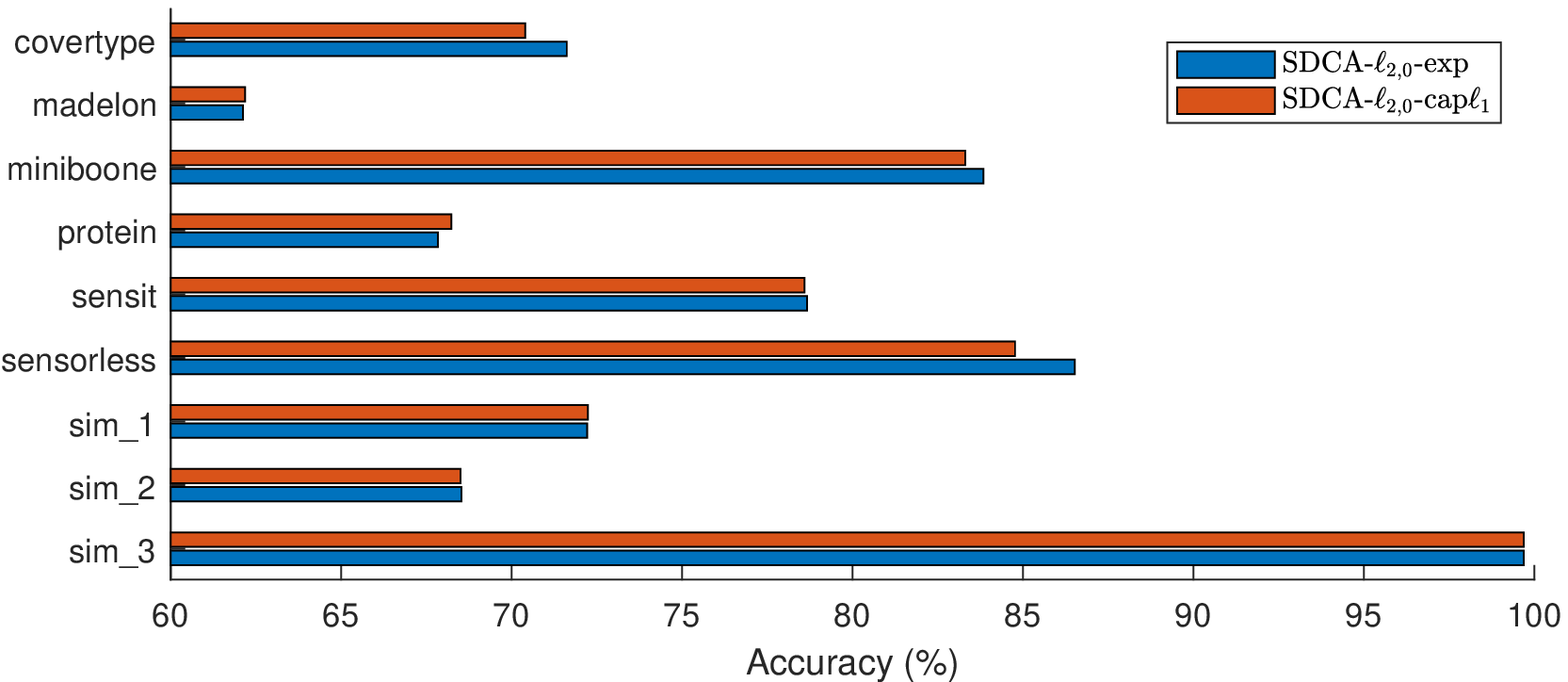} 	}
     \subfigure{	
    \includegraphics[width=\linewidth]{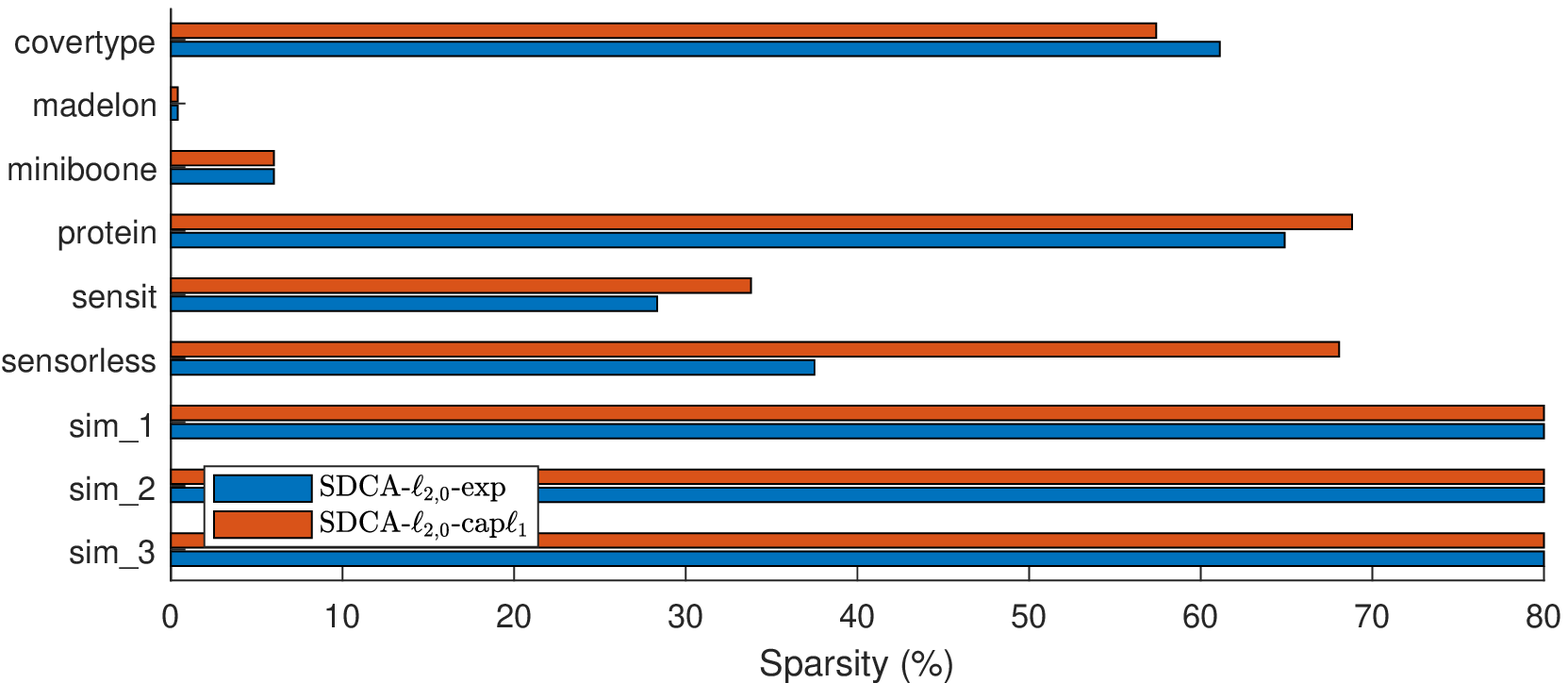} 	}
    \subfigure{	
    \includegraphics[width=\linewidth]{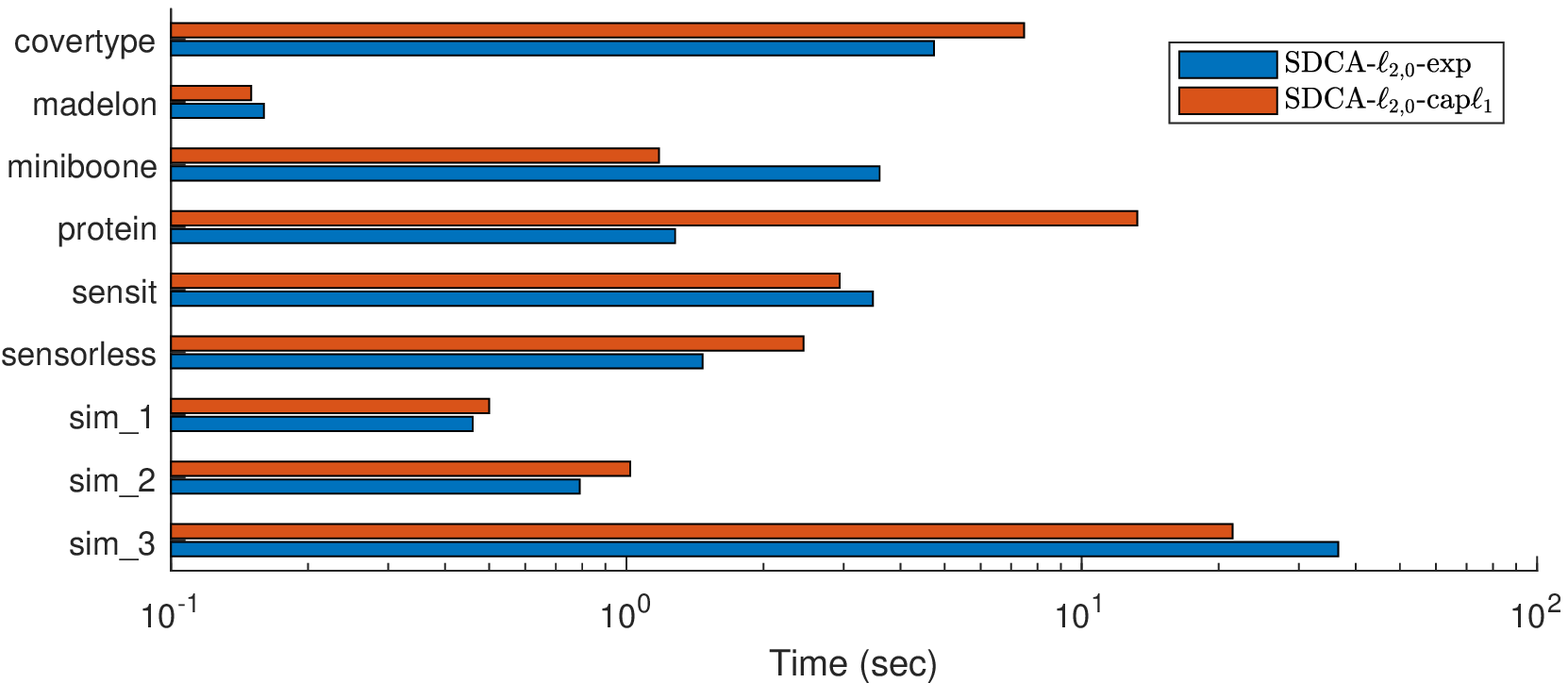} 	
}
\caption{Comparative results between \texttt{SDCA-$\ell_{2, 0}$-exp} and  \texttt{SDCA-$\ell_{2, 0}$-cap$\ell_1$} (running time is plotted on a logarithmic scale).}
\label{fig:exp_3}
\end{figure}


\begin{longtable}[!tbh]{llrrrrrr}
\caption{Comparative results on both synthetic and real datasets. \newline Bold values correspond to best results for each dataset. $n$, $d$ and $Q$ is the number of instances, the number of variables and the number of classes respectively.} 
\label{tbl.experiment}  
\\
\toprule
\multirow{2}{*}{Dataset}  & \multicolumn{1}{c}{\multirow{2}{*}{Algorithm}} & \multicolumn{2}{c}{Accuracy (\%)}    & \multicolumn{2}{c}{Time (s)}          & \multicolumn{2}{c}{Sparsity (\%)}    \\
\cmidrule(lr){3-4}\cmidrule(lr){5-6}\cmidrule(lr){7-8}
& \multicolumn{1}{r}{} & \multicolumn{1}{r}{Mean} & STD  & \multicolumn{1}{r}{Mean} & STD    & \multicolumn{1}{r}{Mean} & STD  \\
\midrule
\textit{covertype} & \texttt{SDCA-$\ell_{2, 0}$-exp} & 71.62 & 0.05 & \textbf{4.74} & \textbf{0.07} & 61.11 & 3.21 \\
$n = 581,012$ & \texttt{SDCA-$\ell_{1, 0}$-exp} & 71.34 & 0.07 & 10.27 & 1.25 & 69.91 & 1.77 \\
$d = 54$ & \texttt{SDCA-$\ell_{\infty, 0}$-exp} & 69.92 & 0.38 & 11.93 & 0.88 & 60.49 & 1.51 \\
$Q = 7$ & \texttt{SDCA-$\ell_{2, 0}$-cap$\ell_1$} & 70.40 & 0.03 & 7.47 & 5.69 & 57.41 & 1.85 \\
 & \texttt{SDCA-$\ell_{1, 0}$-cap$\ell_1$} & 68.60 & 0.29 & 8.98 & 2.03 & \textbf{25.93} & \textbf{0.00} \\
 & \texttt{SDCA-$\ell_{\infty, 0}$-cap$\ell_1$} & 70.16 & 0.03 & 14.80 & 3.63 & 56.79 & 3.85 \\
 & \texttt{DCA-$\ell_{2, 0}$-exp} & 72.15 & 0.08 & 92.73 & 0.51 & 64.81 & 1.51 \\
 & \texttt{DCA-$\ell_{1, 0}$-exp} & \textbf{72.28} & \textbf{0.07} & 57.93 & 2.87 & 73.61 & 0.93 \\
 & \texttt{DCA-$\ell_{\infty, 0}$-exp} & 69.39 & 0.10 & 57.22 & 5.36 & 42.13 & 0.93 \\
 & \texttt{DCA-$\ell_{2, 0}$-cap$\ell_1$} & 70.40 & 0.03 & 61.15 & 3.34 & 57.41 & 1.85 \\
 & \texttt{DCA-$\ell_{1, 0}$-cap$\ell_1$} & 69.41 & 0.39 & 37.20 & 1.23 & 69.14 & 1.07 \\
 & \texttt{DCA-$\ell_{\infty, 0}$-cap$\ell_1$} & 72.09 & 0.13 & 19.99 & 0.10 & 49.38 & 5.35 \\
 & \texttt{SPGD-$\ell_{2,1}$} & 66.97 & 0.51 & 60.59 & 7.09 & 100.00 & 0.00 \\
 & \texttt{msgl} & 71.22 & 0.02 & 525.49 & 1.10 & 68.52 & 0.00 \\
& & & & & & & \\
\textit{madelon} & \texttt{SDCA-$\ell_{2, 0}$-exp} & \textbf{62.12} & \textbf{1.00} & 0.16 & 0.02 & \textbf{0.40} & \textbf{0.12} \\
$n = 2,600 $ & \texttt{SDCA-$\ell_{1, 0}$-exp} & 61.92 & 0.80 & \textbf{0.14} & \textbf{0.03} & 0.65 & 0.10 \\
$d = 500$ & \texttt{SDCA-$\ell_{\infty, 0}$-exp} & 61.68 & 1.05 & 0.16 & 0.01 & 0.70 & 1.47 \\
$Q = 2$ & \texttt{SDCA-$\ell_{2, 0}$-cap$\ell_1$} & 62.18 & 1.35 & 0.15 & 0.09 & 0.40 & 0.00 \\
 & \texttt{SDCA-$\ell_{1, 0}$-cap$\ell_1$} & 61.73 & 1.26 & 0.16 & 0.02 & 1.53 & 0.12 \\
 & \texttt{SDCA-$\ell_{\infty, 0}$-cap$\ell_1$} & 61.99 & 1.06 & 0.16 & 0.31 & 10.60 & 0.20 \\
 & \texttt{DCA-$\ell_{2, 0}$-exp} & 61.54 & 0.79 & 0.29 & 0.27 & 0.85 & 0.19 \\
 & \texttt{DCA-$\ell_{1, 0}$-exp} & 61.83 & 1.12 & 0.32 & 0.02 & 0.55 & 0.10 \\
 & \texttt{DCA-$\ell_{\infty, 0}$-exp} & 61.88 & 1.03 & 2.17 & 0.01 & 4.65 & 0.25 \\
 & \texttt{DCA-$\ell_{2, 0}$-cap$\ell_1$} & 61.28 & 2.23 & 0.21 & 0.00 & 0.93 & 0.12 \\
 & \texttt{DCA-$\ell_{1, 0}$-cap$\ell_1$} & 61.54 & 1.57 & 0.41 & 0.00 & 1.07 & 0.31 \\
 & \texttt{DCA-$\ell_{\infty, 0}$-cap$\ell_1$} & 60.58 & 1.07 & 0.35 & 0.01 & 2.73 & 0.23 \\
 & \texttt{SPGD-$\ell_{2,1}$} & 61.79 & 0.80 & 1.07 & 0.03 & 1.00 & 0.20 \\
 & \texttt{msgl} & 60.48 & 2.37 & 23.92 & 0.12 & 0.67 & 0.00 \\ 
& & & & & & & \\
\textit{miniboone} & \texttt{SDCA-$\ell_{2, 0}$-exp} & 83.84 & 0.08 & 3.60 & 0.04 & 6.00 & 0.00 \\
$n = 130,065$ & \texttt{SDCA-$\ell_{1, 0}$-exp} & 83.90 & 0.10 & 1.57 & 0.06 & 8.00 & 0.00 \\
$d = 50$ & \texttt{SDCA-$\ell_{\infty, 0}$-exp} & 83.10 & 0.22 & 1.62 & 0.04 & 8.00 & 0.00 \\
$Q = 2$ & \texttt{SDCA-$\ell_{2, 0}$-cap$\ell_1$} & 83.31 & 0.15 & \textbf{1.18} & \textbf{0.01} & 6.00 & 0.00 \\
 & \texttt{SDCA-$\ell_{1, 0}$-cap$\ell_1$} & 82.50 & 0.06 & 2.96 & 0.19 & 8.00 & 0.00 \\
 & \texttt{SDCA-$\ell_{\infty, 0}$-cap$\ell_1$} & 83.77 & 0.10 & 4.22 & 0.28 & 16.00 & 4.00 \\
 & \texttt{DCA-$\ell_{2, 0}$-exp} & 83.93 & 0.12 & 2.49 & 0.31 & 6.00 & 0.00 \\
 & \texttt{DCA-$\ell_{1, 0}$-exp} & \textbf{84.19} & \textbf{0.15} & 9.42 & 0.09 & 8.00 & 0.00 \\
 & \texttt{DCA-$\ell_{\infty, 0}$-exp} & 81.54 & 0.12 & 9.81 & 3.45 & 8.00 & 0.00 \\
 & \texttt{DCA-$\ell_{2, 0}$-cap$\ell_1$} & 83.74 & 0.07 & 7.04 & 0.01 & 6.00 & 0.00 \\
 & \texttt{DCA-$\ell_{1, 0}$-cap$\ell_1$} & 83.11 & 0.05 & 7.54 & 0.00 & \textbf{4.00} & \textbf{0.00} \\
 & \texttt{DCA-$\ell_{\infty, 0}$-cap$\ell_1$} & 82.81 & 0.09 & 7.14 & 0.00 & 15.33 & 1.15 \\
 & \texttt{SPGD-$\ell_{2,1}$} & 83.86 & 0.13 & 8.77 & 0.41 & 11.00 & 1.15 \\
 & \texttt{msgl} & 81.99 & 0.21 & 121.17 & 4.30 & 10.00 & 0.00 \\
 & & & & & & & \\
\textit{protein} & \texttt{SDCA-$\ell_{2, 0}$-exp} & 67.84 & 1.11 & 1.28 & 0.06 & 64.89 & 1.95 \\
$n = 24,387$ & \texttt{SDCA-$\ell_{1, 0}$-exp} & 67.23 & 0.90 & 1.47 & 0.02 & 63.67 & 2.39 \\
$d = 357$ & \texttt{SDCA-$\ell_{\infty, 0}$-exp} & 68.13 & 0.57 & 1.36 & 0.06 & 92.79 & 0.86 \\
$Q = 3$ & \texttt{SDCA-$\ell_{2, 0}$-cap$\ell_1$} & 66.41 & 1.12 & \textbf{1.13} & \textbf{0.12} & \textbf{22.64} & \textbf{0.47} \\
 & \texttt{SDCA-$\ell_{1, 0}$-cap$\ell_1$} & 67.25 & 1.24 & 1.33 & 0.14 & 65.73 & 1.09 \\
 & \texttt{SDCA-$\ell_{\infty, 0}$-cap$\ell_1$} & \textbf{68.19} & \textbf{1.06} & \textbf{1.13} & \textbf{0.10} & 77.47 & 0.42 \\
 & \texttt{DCA-$\ell_{2, 0}$-exp} & 67.23 & 0.75 & 2.59 & 0.02 & 42.56 & 1.66 \\
 & \texttt{DCA-$\ell_{1, 0}$-exp} & 66.19 & 0.96 & 3.77 & 0.41 & 33.36 & 1.87 \\
 & \texttt{DCA-$\ell_{\infty, 0}$-exp} & 66.93 & 0.75 & 13.53 & 2.12 & 54.21 & 0.61 \\
 & \texttt{DCA-$\ell_{2, 0}$-cap$\ell_1$} & 67.04 & 0.72 & 3.35 & 0.00 & 50.47 & 1.27 \\
 & \texttt{DCA-$\ell_{1, 0}$-cap$\ell_1$} & 67.89 & 0.60 & 3.43 & 0.00 & 79.68 & 0.58 \\
 & \texttt{DCA-$\ell_{\infty, 0}$-cap$\ell_1$} & 66.90 & 0.84 & 3.66 & 1.04 & 58.43 & 1.46 \\
 & \texttt{SPGD-$\ell_{2,1}$} & 66.59 & 1.82 & 11.73 & 2.80 & 92.70 & 2.50 \\
 & \texttt{msgl} & 67.34 & 0.48 & 5.59 & 0.36 & 47.15 & 1.32 \\
 & & & & & & & \\
\textit{sensit} & \texttt{SDCA-$\ell_{2, 0}$-exp} & 78.67 & 0.11 & 3.48 & 0.21 & 28.33 & 8.50 \\
$n = 98,528$ & \texttt{SDCA-$\ell_{1, 0}$-exp} & 79.64 & 0.22 & 3.11 & 0.96 & 34.00 & 17.35 \\
$d = 100$ & \texttt{SDCA-$\ell_{\infty, 0}$-exp} & 79.73 & 0.28 & \textbf{1.61} & \textbf{0.07} & 53.67 & 6.81 \\
$Q = 3$ & \texttt{SDCA-$\ell_{2, 0}$-cap$\ell_1$} & 78.59 & 0.08 & 2.94 & 0.17 & 33.80 & 5.31 \\
 & \texttt{SDCA-$\ell_{1, 0}$-cap$\ell_1$} & 79.71 & 0.23 & 2.94 & 2.12 & 100.00 & 0.00 \\
 & \texttt{SDCA-$\ell_{\infty, 0}$-cap$\ell_1$} & 78.83 & 0.24 & 2.91 & 0.20 & 35.00 & 2.74 \\
 & \texttt{DCA-$\ell_{2, 0}$-exp} & \textbf{79.84} & \textbf{0.11} & 27.97 & 0.80 & 19.25 & 0.50 \\
 & \texttt{DCA-$\ell_{1, 0}$-exp} & 79.65 & 0.21 & 18.31 & 4.90 & \textbf{17.50} & \textbf{0.58} \\
 & \texttt{DCA-$\ell_{\infty, 0}$-exp} & 79.16 & 0.17 & 42.91 & 5.24 & 91.50 & 2.38 \\
 & \texttt{DCA-$\ell_{2, 0}$-cap$\ell_1$} & 78.92 & 0.15 & 26.36 & 2.22 & 56.67 & 1.53 \\
 & \texttt{DCA-$\ell_{1, 0}$-cap$\ell_1$} & 78.91 & 0.38 & 27.05 & 2.80 & 57.33 & 0.58 \\
 & \texttt{DCA-$\ell_{\infty, 0}$-cap$\ell_1$} & 79.20 & 0.17 & 35.78 & 2.69 & 91.67 & 7.23 \\
 & \texttt{SPGD-$\ell_{2,1}$} & 79.52 & 0.27 & 22.44 & 2.41 & 27.00 & 1.00 \\
 & \texttt{msgl} & 79.02 & 0.13 & 11.16 & 0.53 & 23.00 & 0.00 \\
 & & & & & & & \\
\textit{sensorless} & \texttt{SDCA-$\ell_{2, 0}$-exp} & 86.52 & 0.78 & 1.47 & 0.16 & 37.50 & 5.10 \\
$n = 58,509$ & \texttt{SDCA-$\ell_{1, 0}$-exp} & 87.33 & 0.27 & 1.40 & 0.09 & 54.69 & 10.67 \\
$d = 48$ & \texttt{SDCA-$\ell_{\infty, 0}$-exp} & 86.91 & 0.19 & 1.41 & 0.38 & 97.92 & 2.08 \\
$Q = 11$ & \texttt{SDCA-$\ell_{2, 0}$-cap$\ell_1$} & 84.77 & 0.08 & 2.45 & 0.13 & 68.06 & 1.20 \\
 & \texttt{SDCA-$\ell_{1, 0}$-cap$\ell_1$} & 82.89 & 0.30 & 2.69 & 0.62 & 72.92 & 2.08 \\
 & \texttt{SDCA-$\ell_{\infty, 0}$-cap$\ell_1$} & 87.12 & 0.72 & \textbf{1.36} & \textbf{0.09} & \textbf{25.69} & \textbf{1.20} \\
 & \texttt{DCA-$\ell_{2, 0}$-exp} & 90.00 & 0.31 & 15.96 & 0.65 & 32.81 & 1.04 \\
 & \texttt{DCA-$\ell_{1, 0}$-exp} & 89.11 & 0.18 & 16.28 & 0.97 & 31.25 & 0.00 \\
 & \texttt{DCA-$\ell_{\infty, 0}$-exp} & \textbf{90.76} & \textbf{0.14} & 18.99 & 0.81 & 100.00 & 0.00 \\
 & \texttt{DCA-$\ell_{2, 0}$-cap$\ell_1$} & 89.60 & 1.15 & 24.75 & 1.39 & 53.47 & 1.20 \\
 & \texttt{DCA-$\ell_{1, 0}$-cap$\ell_1$} & 88.87 & 1.04 & 16.28 & 0.44 & 47.92 & 0.80 \\
 & \texttt{DCA-$\ell_{\infty, 0}$-cap$\ell_1$} & 81.06 & 3.9 & 14.99 & 3.22 & 41.67 & 0.70 \\
 & \texttt{SPGD-$\ell_{2,1}$} & 86.07 & 1.39 & 8.16 & 1.05 & 88.89 & 2.41 \\
 & \texttt{msgl} & 85.06 & 0.31 & 199.00 & 41.75 & 50.00 & 0.00 \\ 
 & & & & & & & \\
\textit{sim\_1} & \texttt{SDCA-$\ell_{2, 0}$-exp} & 72.22 & 0.46 & 0.46 & 0.02 & \textbf{80.00} & \textbf{0.00} \\
$n = 100,000$ & \texttt{SDCA-$\ell_{1, 0}$-exp} & 72.24 & 0.43 & 0.46 & 0.03 & \textbf{80.00} & \textbf{0.00} \\
$d = 50$ & \texttt{SDCA-$\ell_{\infty, 0}$-exp} & 72.24 & 0.47 & 0.56 & 0.06 & \textbf{80.00} & \textbf{0.00} \\
$Q = 4$ & \texttt{SDCA-$\ell_{2, 0}$-cap$\ell_1$} & 72.24 & 0.52 & 0.50 & 0.04 & \textbf{80.00} & \textbf{0.00} \\
 & \texttt{SDCA-$\ell_{1, 0}$-cap$\ell_1$} & 72.24 & 0.58 & 0.42 & 0.06 & \textbf{80.00} & \textbf{0.00} \\
 & \texttt{SDCA-$\ell_{\infty, 0}$-cap$\ell_1$} & 72.21 & 0.58 & 0.51 & 0.07 & \textbf{80.00} & \textbf{0.00} \\
 & \texttt{DCA-$\ell_{2, 0}$-exp} & 72.22 & 0.40 & 2.34 & 0.05 & \textbf{80.00} & \textbf{0.00} \\
 & \texttt{DCA-$\ell_{1, 0}$-exp} & 72.25 & 0.38 & 0.26 & 0.01 & \textbf{80.00} & \textbf{0.00} \\
 & \texttt{DCA-$\ell_{\infty, 0}$-exp} & 72.22 & 0.40 & 9.79 & 0.10 & \textbf{80.00} & \textbf{0.00} \\
 & \texttt{DCA-$\ell_{2, 0}$-cap$\ell_1$} & 72.25 & 0.52 & 0.32 & 0.00 & \textbf{80.00} & \textbf{0.00} \\
 & \texttt{DCA-$\ell_{1, 0}$-cap$\ell_1$} & 72.24 & 0.52 & \textbf{0.30} & \textbf{0.00} & \textbf{80.00} & \textbf{0.00} \\
 & \texttt{DCA-$\ell_{\infty, 0}$-cap$\ell_1$} & 72.24 & 0.51 & 3.00 & 0.00 & \textbf{80.00} & \textbf{0.00} \\
 & \texttt{SPGD-$\ell_{2,1}$} & 71.48 & 0.81 & 7.16 & 0.91 & 83.50 & 2.52 \\
 & \texttt{msgl} & \textbf{72.33} & 0.18 & 214.83 & 25.40 & 82.00 & 0.00 \\ 
 & & & & & & & \\
\textit{sim\_2} & \texttt{SDCA-$\ell_{2, 0}$-exp} & 68.53 & 0.29 & 0.79 & 0.00 & \textbf{80.00} & \textbf{0.00} \\
$n = 150,000$ & \texttt{SDCA-$\ell_{1, 0}$-exp} & 68.48 & 0.34 & 0.73 & 0.16 & \textbf{80.00} & \textbf{0.00} \\
$d = 50$ & \texttt{SDCA-$\ell_{\infty, 0}$-exp} & \textbf{68.71} & \textbf{0.23} & 0.97 & 0.12 & \textbf{80.00} & \textbf{0.00} \\
$Q = 3$ & \texttt{SDCA-$\ell_{2, 0}$-cap$\ell_1$} & 68.50 & 0.29 & 1.02 & 0.14 & \textbf{80.00} & \textbf{0.00} \\
 & \texttt{SDCA-$\ell_{1, 0}$-cap$\ell_1$} & 67.42 & 0.40 & \textbf{0.71} & \textbf{0.23} & \textbf{80.00} & \textbf{0.00} \\
 & \texttt{SDCA-$\ell_{\infty, 0}$-cap$\ell_1$} & 68.38 & 0.28 & 1.40 & 0.18 & \textbf{80.00} & \textbf{0.00} \\
 & \texttt{DCA-$\ell_{2, 0}$-exp} & 68.55 & 0.22 & 1.14 & 0.26 & \textbf{80.00} & \textbf{0.00} \\
 & \texttt{DCA-$\ell_{1, 0}$-exp} & 68.31 & 0.23 & 13.51 & 1.93 & \textbf{80.00} & \textbf{0.00} \\
 & \texttt{DCA-$\ell_{\infty, 0}$-exp} & \textbf{68.71} & \textbf{0.18} & 2.75 & 2.80 & \textbf{80.00} & \textbf{0.00} \\
 & \texttt{DCA-$\ell_{2, 0}$-cap$\ell_1$} & 67.70 & 0.31 & 4.29 & 0.03 & \textbf{80.00} & \textbf{0.00} \\
 & \texttt{DCA-$\ell_{1, 0}$-cap$\ell_1$} & 68.43 & 0.24 & 0.93 & 0.16 & \textbf{80.00} & \textbf{0.00} \\
 & \texttt{DCA-$\ell_{\infty, 0}$-cap$\ell_1$} & 67.49 & 0.35 & 0.69 & 0.10 & \textbf{80.00} & \textbf{0.00} \\
 & \texttt{SPGD-$\ell_{2,1}$} & 67.62 & 0.48 & 7.77 & 0.28 & 82.00 & 0.00 \\
 & \texttt{msgl} & 68.42 & 0.03 & 367.29 & 53.52 & 82.00 & 0.00 \\ 
 & & & & & & & \\
\textit{sim\_3} & \texttt{SDCA-$\ell_{2, 0}$-exp} & 99.69 & 0.04 & 36.61 & 1.48 & \textbf{80.00} & \textbf{0.00} \\
$n = 250,000$ & \texttt{SDCA-$\ell_{1, 0}$-exp} & \textbf{99.93} & \textbf{0.01} & \textbf{10.74} & \textbf{0.42} & \textbf{80.00} & \textbf{0.00} \\
$d = 500$ & \texttt{SDCA-$\ell_{\infty, 0}$-exp} & 99.56 & 0.07 & 22.11 & 3.43 & 80.73 & 0.64 \\
$Q = 4$ & \texttt{SDCA-$\ell_{2, 0}$-cap$\ell_1$} & 99.69 & 0.01 & 21.45 & 0.93 & \textbf{80.00} & \textbf{0.00} \\
 & \texttt{SDCA-$\ell_{1, 0}$-cap$\ell_1$} & 99.00 & 0.01 & 23.10 & 0.12 & \textbf{80.00} & \textbf{0.00} \\
 & \texttt{SDCA-$\ell_{\infty, 0}$-cap$\ell_1$} & 99.67 & 0.01 & 21.05 & 1.06 & \textbf{80.00} & \textbf{0.00} \\
 & \texttt{DCA-$\ell_{2, 0}$-exp} & 99.88 & 0.02 & 249.74 & 10.73 & \textbf{80.00} & \textbf{0.00} \\
 & \texttt{DCA-$\ell_{1, 0}$-exp} & 99.88 & 0.02 & 202.67 & 33.27 & \textbf{80.00} & \textbf{0.00} \\
 & \texttt{DCA-$\ell_{\infty, 0}$-exp} & 97.74 & 2.05 & 431.13 & 26.47 & \textbf{80.00} & \textbf{0.00} \\
 & \texttt{DCA-$\ell_{2, 0}$-cap$\ell_1$} & 99.92 & 0.01 & 178.89 & 7.83 & \textbf{80.00} & \textbf{0.00} \\
 & \texttt{DCA-$\ell_{1, 0}$-cap$\ell_1$} & 99.87 & 0.01 & 270.69 & 17.64 & \textbf{80.00} & \textbf{0.00} \\
 & \texttt{DCA-$\ell_{\infty, 0}$-cap$\ell_1$} & 99.85 & 0.03 & 24.40 & 4.29 & 80.40 & 0.40 \\
 & \texttt{SPGD-$\ell_{2,1}$} & 99.70 & 0.12 & 212.71 & 21.79 & \textbf{80.00} & \textbf{0.00} \\
 & \texttt{msgl} & \textbf{99.93} & \textbf{0.01} & 1581.44 & 14.76 & 80.20 & 0.00 \\
\hline
\end{longtable}

\section{Conclusion}\label{Conclusion}

We have proposed two novel DCA based algorithms, stochastic DCA and inexact stochastic DCA for minimizing a large sum of DC functions, with the aim of reducing the computation cost of DCA in large-scale setting. The sum structure of the objective function $F$ permits us to work separately on the component functions $F_i$. The stochastic DCA is then proposed to tackle problems with huge numbers of $F_i$ while the inexact stochastic DCA aims to address large-scale setting and big data. We have carefully studied the convergence properties of the proposed algorithms. It turns out that the convergence to a critical point of both stochastic DCA and inexact stochastic DCA is guaranteed with probability $1$. 
Furthermore, we have developed DCA and SDCA to group variables selection in multi-class logistic regression, an important problem in machine learning. By using a suitable DC decomposition of the objective function we have designed a DCA scheme in which all computations are explicit and inexpensive. Consequently SDCA is very inexpensive. Numerical results showed that, as expected, \texttt{SDCA-$\ell_{2,0}$-exp} reduces considerably the running time of \texttt{DCA-$\ell_{2,0}$-exp} while achieving equivalent classification accuracy. Moreover, \texttt{SDCA-$\ell_{2,0}$-exp} outperforms the two related algorithms \texttt{msgl} and \texttt{SPGD-$\ell_{2,1}$}. We are convinced that SDCA is an efficient variant of DCA, especially for large-scale setting.

Continuing this research direction, in future works we will develop novel versions of DCA based algorithms (e.g. online/stochastic/approximate/like DCA) for other problems in order to accelerate the convergence of DCA and to deal with large-scale setting and big data.

\appendix

\section{Proof of Theorem \ref{theorem2}}\label{app}
To prove Theorem \ref{theorem2}, we will use the following lemma.
\begin{lemma}\label{lemma1}
Let $f:\mathbb{R}^d\to\mathbb{R}\cap\{+\infty\}$ be a $\rho$-convex function. For any $\epsilon\geq 0$ and any $v\in\partial_{\epsilon} f(x)$ with $x\in$ dom $f$, we have
\begin{equation*}
2\epsilon + f(y) \geq f(x) + \langle v, y - x\rangle + \frac{\rho}{4}\|y-x\|^2,\ \forall y\in\mathbb{R}^d.
\end{equation*}
\end{lemma}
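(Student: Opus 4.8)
The plan is to combine the two hypotheses — the $\epsilon$-subgradient inequality and the $\rho$-convexity — by evaluating both at the midpoint $m=\tfrac12(x+y)$. The appearance of $\tfrac{\rho}{4}$ and $2\epsilon$ in the conclusion, rather than the $\tfrac{\rho}{2}$ and $\epsilon$ one gets for an exact subgradient, strongly hints that a factor of two is being lost at a midpoint, so that is the natural device to try first. Note also that if $y\notin\mathrm{dom}\,f$ the left-hand side is $+\infty$ and the claim is trivial, so I may assume $x,y\in\mathrm{dom}\,f$.

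First I would record the $\epsilon$-subgradient inequality for $v\in\partial_\epsilon f(x)$ at the test point $z=m$; since $m-x=\tfrac12(y-x)$, this gives the lower bound
\[
f(m)\ \geq\ f(x)+\tfrac12\langle v,\,y-x\rangle-\epsilon .
\]
Next I would invoke $\rho$-convexity, i.e.\ the convexity of $g:=f-\tfrac{\rho}{2}\|\cdot\|^2$. Applying ordinary convexity (Jensen) $g(m)\leq\tfrac12 g(x)+\tfrac12 g(y)$ and substituting $g=f-\tfrac{\rho}{2}\|\cdot\|^2$, the quadratic terms collapse: using the identity $\|m\|^2=\tfrac14\|x+y\|^2$, the combination $\tfrac{\rho}{2}\|m\|^2-\tfrac{\rho}{4}\|x\|^2-\tfrac{\rho}{4}\|y\|^2$ simplifies to $-\tfrac{\rho}{8}\|x-y\|^2$, yielding the upper bound
\[
f(m)\ \leq\ \tfrac12 f(x)+\tfrac12 f(y)-\tfrac{\rho}{8}\|x-y\|^2 .
\]

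Finally I would chain the two displays — the lower bound on $f(m)$ does not exceed the upper bound — to obtain
\[
f(x)+\tfrac12\langle v,\,y-x\rangle-\epsilon\ \leq\ \tfrac12 f(x)+\tfrac12 f(y)-\tfrac{\rho}{8}\|x-y\|^2 ,
\]
and then rearrange and multiply through by $2$ to recover exactly the asserted inequality. The only genuinely nontrivial step is the structural choice in the first paragraph: recognizing that the $\epsilon$-subgradient inequality should be applied at the midpoint $m$ rather than at $y$ itself, which is precisely what produces the $\tfrac{\rho}{4}$ and the $2\epsilon$. Everything after that choice reduces to the elementary identity for $\|m\|^2$ and routine algebra, and no regularity beyond the stated $\rho$-convexity and $x\in\mathrm{dom}\,f$ is required.
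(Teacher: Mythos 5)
Your proof is correct and is essentially the paper's own argument: the paper applies the $\epsilon$-subgradient inequality at $z = x + t(y-x)$, adds the strong-convexity (Jensen-type) inequality $tf(y)+(1-t)f(x) \geq f(x+t(y-x)) + \frac{\rho t(1-t)}{2}\|y-x\|^2$, and then sets $t = 1/2$, which is exactly your midpoint device. The only cosmetic differences are that you fix $t=1/2$ from the start and derive the strong-convexity inequality by hand from ordinary convexity of $f - \frac{\rho}{2}\|\cdot\|^2$ together with the parallelogram identity, whereas the paper invokes it directly with general $t$.
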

\begin{proof}
Since $v\in\partial_{\epsilon} f(x)$, we have
\begin{equation*}
\epsilon + f(z) \geq f(x) + \langle v, z - x\rangle,\ \forall z\in\mathbb{R}^d.
\end{equation*}
Replacing $z$ with $x+t(y-x)$ in this inequality gives that
\begin{equation*}
\epsilon + f(x+t(y-x)) \geq f(x) + t\langle v, y - x\rangle,\ \forall y\in\mathbb{R}^d.
\end{equation*}
It follows from the $\rho$-convexity of $f$ that for $y\in\mathbb{R}^d$ and $t\in (0,1)$,
\begin{equation*}
tf(y) + (1-t)f(x) \geq f(x + t(y-x)) + \frac{\rho t(1-t)}{2}\|y-x\|^2.
\end{equation*}
Summing the two above inequalities gives us 
\begin{equation*}
\epsilon + tf(y) \geq tf(x) + t\langle v, y - x\rangle + \frac{\rho t(1-t)}{2}\|y-x\|^2.
\end{equation*}
Thus, the conclusion follows from this inequality with $t=1/2$.
\end{proof}
\begin{proof}(of Theorem \ref{theorem2})
a) Let $x^0_i$ be the copies of $x^0$. We set $x^{l+1}_i = x^{l+1}$ for all $i\in s_{l+1}$ and $x^{l+1}_j = x^l_j$ for $j\not\in s_{l+1}$. Set $\epsilon^0_i = \epsilon^0$ and $\epsilon^{l+1}_i=\epsilon^{l+1}$ if $i\in s_{l+1}$, $\epsilon^l_i$ otherwise. We then have $v^l_i\in\partial_{\epsilon^l_i}h_i(x_i^l)$ for $i=1,...,n$. Let $T_i^l$ be the function given by
\begin{align*}
T_i^l(x) = g_i(x)  - h_i(x_i^l) - \left\langle x - x_i^l, v^l_i \right\rangle + 2\epsilon^l_i.
\end{align*}
It follows from $v^l_i\in\partial_{\epsilon^l_i}h_i(x_i^l)$ that
\begin{equation*}
\epsilon^l_i + h_i(x) \geq h_i(x_i^l) + \left\langle x - x_i^l, v^l_i \right\rangle.
\end{equation*}
This implies $T_i^l(x) \geq F_i(x) + \epsilon^l_i \geq F_i(x)$ for all $l \geq 0$, $i=1,...,n$. We also observe that $x^{l+1}$ is an $\epsilon^l$-solution of the following convex problem
\begin{equation}\label{sub-problem2}
\min_{x} T^l(x):=\frac{1}{n}\sum_{i=1}^nT_i^l(x)
\end{equation}
Therefore
\begin{equation}\label{eq4}
\begin{aligned}
T^l(x^{l+1}) \leq T^l(x^l) +\epsilon^l& = T^{l-1}(x^l)+ \frac{1}{n}\sum_{i\in s_l}[T_i^l(x^l) - T_i^{l-1}(x^l)] +\epsilon^l\\
& = T^{l-1}(x^l)+ \frac{1}{n}\sum_{i\in s_l}[F_i(x^l) + 2\epsilon^l - T_i^{l-1}(x^l)] + \epsilon^l,
\end{aligned}
\end{equation}
where the second equality follows from $T_i^l(x^l) = F_i(x^l) + 2\epsilon^l$ for all $i\in s_l$.
Let $\mathcal{F}_l$ denote the $\sigma$-algebra generated by the entire history of ISDCA up to the iteration $l$, i.e., $\mathcal{F}_0 = \sigma(x^0,\epsilon^0)$ and $\mathcal{F}_l = \sigma(x^0,...,x^l,\epsilon^0,...,\epsilon^l, s_0,...,s_{l-1})$ for all $l \geq 1$. By taking the expectation of the inequality \eqref{eq4} conditioned on $\mathcal{F}_l$, we have 
\begin{equation*}
\mathbb{E}\left[T^l(x^{l+1})|\mathcal{F}_l\right] \leq T^{l-1}(x^l) - \frac{b}{n}\left[T^{l-1}(x^l) - F(x^l)\right] + \left(\frac{2b}{n}+1
\right)\epsilon^l.
\end{equation*}
Since $\sum_{l=0}^\infty\epsilon^l_i < +\infty$ with probability $1$, by applying the supermartingale convergence theorem \citep{neveu75,Bertsekas03} to the nonnegative sequences $\{T^{l-1}(x^l) - \alpha^*\}, \{\frac{b}{n}[T^{l-1}(x^l) - F(x^l)]\}$ and $\{(\frac{2b}{n}+1)\epsilon^l\}$, we   conclude that the sequence $\{T^{l-1}(x^l,y^l) - \alpha^*\}$ converges to $T^* -\alpha^*$ and 
\begin{equation}\label{eq728}
\sum_{l=1}^\infty\left[ T^{l-1}(x^l) - F(x^l)\right] < \infty,
\end{equation}
with probability $1$. Therefore $\{F(x^l)\}$ converges almost surely to $T^*$.

b) By $v_i^{l-1}\in\partial_{\epsilon^{l-1}_i}h_i(x_i^{l-1})$ and Lemma \ref{lemma1}, we have
\begin{equation*}\label{eq1}
2\epsilon^{l-1}_i + h_i(x)  \geq  h_i(x_i^{l-1}) + \langle x-x_i^{l-1}, v_i^{l-1} \rangle + \frac{\rho(h_i)}{4}\|x - x_i^{l-1}\|^2,\ \forall x\in\mathbb{R}^d.
\end{equation*}
This implies
\begin{equation}\label{eq3}
F_i(x)  \leq T_i^{l-1}(x) - \frac{\rho(h_i)}{4}\|x - x_i^{l-1}\|^2.
\end{equation}
From \eqref{eq4} and \eqref{eq3} with $x=x^l$, we have 
\begin{equation}\label{eq5}
T^l(x^{l+1}) \leq T^{l-1}(x^l) - \frac{1}{n}\sum_{i\in s_l}\frac{\rho(h_i)}{4}\|x - x_i^{l-1}\|^2 + \left(\frac{2b}{n}+1
\right)\epsilon^l.
\end{equation} 
Taking the expectation of the inequality \eqref{eq5} conditioned on $\mathcal{F}_l$, we obtain 
\begin{equation*}
\mathbb{E}\left[T^l(x^{l+1})|\mathcal{F}_l\right] \leq T^{l-1}(x^l) - \frac{b}{4n^2}\sum_{i=1}^n\rho(h_i)\|x^l-x_i^{l-1}\|^2 + \left(\frac{2b}{n}+1
\right)\epsilon^l.
\end{equation*}
Combining this and $\rho = \min_{i=1,...,n}\rho(h_i)  >0$ gives us
\begin{equation*}
\mathbb{E}\left[T^l(x^{l+1})|\mathcal{F}_l\right] \leq T^{l-1}(x^l) - \frac{b\rho}{4n^2}\sum_{i=1}^n\|x^l-x_i^{l-1}\|^2 + \left(\frac{2b}{n}+1
\right)\epsilon^l.
\end{equation*}
Applying the supermartingale convergence theorem to the nonnegative sequences $\{T^{l-1}(x^l) - \alpha^*\}, \{\frac{b}{4\rho n^2}\sum_{i=1}^n\|x^l-x_i^{l-1}\|^2\}$ and $\{(\frac{2b}{n}+1)\epsilon^l\}$, we get 
\begin{equation*}\label{eq7}
\sum_{l=1}^\infty\sum_{i=1}^n\|x^l-x_i^{l-1}\|^2 < \infty,
\end{equation*}
with probability $1$. In particular, for $i=1,...,n$, we have
\begin{equation}\label{eq8}
\sum_{l=1}^\infty\|x^l-x_i^{l-1}\|^2 < \infty,
\end{equation}
and hence $\lim_{l \rightarrow \infty}\|x^l-x_i^{l-1}\| =0$ almost surely. 

c) Assume that there exists a sub-sequence $\{x^{l_k}\}$ of $\{x^l\}$ such that $x^{l_k} \rightarrow x^*$ almost surely. From \eqref{eq8}, we have $\|x^{l_k+1}-x_i^{l_k}\| \rightarrow 0$ almost surely. Without loss of generality, we can suppose that the sub-sequence $v_i^{l_k} \rightarrow v^*_i$ almost surely. From the proof of (a), we have
\begin{equation*}
\frac{1}{n}\sum_{i=1}^n\epsilon^l_i \leq T^l(x^{l+1}) - F(x^{l+1}).
\end{equation*}
From this and \eqref{eq728} it follows that $\epsilon^l_i$ converges to $0$ as $l\to +\infty$ with probability $1$. Since $v_i^{l_k}\in\partial_{\epsilon_i^{l_k}} h_i(x_i^{l_k})$, $\epsilon^{l_k}_i\to 0$ with probability $1$, and by the closed property of the $\epsilon$-subdifferential mapping $\partial_{\epsilon_i^{l_k}} h_i$, we have $v^*_i\in\partial h_i(x^*)$. Since $x^{l_k+1}$  is a $\epsilon^{l_k}$-solution of the problem $\min_x T^{l_k}(x)$, we obtain
\begin{equation}\label{eq9}
T^{l_k}(x^{l_k+1}) \leq T^{l_k}(x) +\epsilon^{l_k}.
\end{equation}
Taking $k\rightarrow \infty$ gives us 
\begin{equation*}\label{eq10}
\limsup_{l_k\to +\infty}G(x^{l_k+1})  \leq  G(x)  - \langle x- x^*, v^*\rangle,\ \forall x\in \mathbb{R}^d,
\end{equation*}
with probability $1$, where $v^* = \frac{1}{n}\sum_{i=1}^nv^*_i\in \partial H(x^*)$ almost surely. It follows from this with $x = x^*$ that
\begin{equation*}
\limsup_{l_k\to +\infty}G(x^{l_k+1}) \leq  G(x^*),
\end{equation*}
almost surely. Combining this with the lower semi-continuity of $G$ gives us that
\begin{equation*}
\lim_{l_k\to +\infty}G(x^{l_k+1}) =  G(x^*),
\end{equation*}
almost surely. Thus, we have
\begin{equation*}
G(x^*)  \leq  G(x)  - \langle x- x^*, v^*\rangle,\ \forall x\in \mathbb{R}^d,
\end{equation*}
almost surely. This implies
\begin{equation}\label{eq11}
v^*\in \partial G(x^*),
\end{equation}
with probability one. Therefore,
\begin{equation}
v^*\in \partial G(x^*)\cap\partial H(x^*),
\end{equation}
with probability $1$. This implies that $x^*$ is a critical point of $F$ with probability $1$ and the proof is then complete. 
\end{proof}


\end{document}